\theoremstyle{plain}
\newtheorem{theorem}{Theorem}[section]
\newtheorem{definition}[theorem]{Definition}
\newtheorem{lemma}[theorem]{Lemma}
\newtheorem{proposition}[theorem]{Proposition}
\theoremstyle{remark}
\newtheorem{remark}[theorem]{Remark}
\def\C{{\mathbf C}}
\def\R{{\mathbf R}}
\def\({\left(}
\def\){\right)}
\def\<{\left\langle}
\def\>{\right\rangle}
\def\1{{\mathbf 1}}
\def\d{{\partial}}
\DeclareMathOperator{\RE}{Re}
\DeclareMathOperator{\IM}{Im}
\DeclareMathOperator{\diver}{div}
\DeclareMathOperator{\sgn}{sgn}
\numberwithin{equation}{section}
\date\today
\title{A model of Synchronization over Quantum Networks} 
\author{Paolo Antonelli}
\address{Gran Sasso Science Institute, L'Aquila}
\email{paolo.antonelli@gssi.it}
\author{Pierangelo Marcati}
\address{Gran Sasso Science Institute, L'Aquila \and Universit\`a dell'Aquila}
\email{marcati@univaq.it}
\begin{document}

\begin{abstract}
We investigate  a non-Abelian generalization of the Kuramoto model proposed by Lohe and given by $N$ quantum oscillators (``nodes") connected by a quantum network where the wavefunction at each node is distributed over quantum channels to all other connected nodes.  It leads to a system of Schr\"odinger equations coupled by nonlinear self-interacting potentials given by their correlations. We give a complete picture of synchronization results, given on the relative size of the natural frequency and the coupling constant, for two non-identical oscillators and show complete phase synchronization for arbitrary $N>2$ identical oscillators.  Our results are mainly based on the analysis of the ODE system satisfied by the correlations and on the introduction of a quantum order parameter, which is analogous to the one defined by Kuramoto in the classical model.  As a consequence of the previous results, we obtain the synchronization of the probability and the current densities defined via the Madelung transformations.
  
\end{abstract}
\maketitle 
\section{Introduction}
Synchronization is one of the most important phenomena in classical physics and its history can be dated back to the observation of two pendulum clocks by Huygens in 1665, that two pendulum clocks fastened to the same beam will synchronize (anti-phase). Examples of spontaneous synchronization are rhythmic applause in a large audience, heart beat (due to synchronization of 1000’s of cells), synchronous flashing of fireflies, \cite{SS, Str, PRK}.

The phenomenon of synchronization can be placed at the interface between statistical physics and nonlinear dynamics. We can consider a group of self-sustained oscillators, each with a (possibly random) proper frequency, we can observe that in certain regimes the interaction among the oscillators overcomes the frequency chaotic (random) behavior and yields them to synchronize and oscillate with the same frequency.   It is a key concept to the understanding of self-organization phenomena occurring in the fields of coupled oscillators of the dissipative type. Wave phenomena and pattern formation, may be viewed as typical synchronization phenomena in distributed systems while in contrast to the turbulence in reaction-diffusion systems, which is caused by desynchronization among local oscillators. Phase-transition-like phenomena can be characterized by the appearance or disappearance of collective oscillations.  Examples of multi-oscillator systems in living organisms and detailed accounts may be found in the Winfree's book (1980) \cite{Win}.  

The 1975 Kuramoto model \cite{Kur2} consists of N coupled phase oscillators with random proper  frequencies  described by a given probability density. They exhibit a non-equilibrium phase transition to a synchronized state, with the degree of synchronicity described by an order parameter. A one-dimensional Josephson array as a natural realization of the Kuramoto model was given by  \cite{WCS}.

Recently, the question if synchronization exists in quantum systems has attracted a lot of interest. There have been important attempts to address this problem theoretically, from communities as diverse as trapped atomic ensembles, Josephson junctions, nanomechanical systems, quantum cryptography, etc…..  

Quantum mechanics introduces two effects. The first is quantum noise, which is due to the oscillator gaining or losing individual phonons. The second effect is that the oscillators can be quantum mechanically entangled with each other. The general question is whether synchronization survives in the quantum limit, and how quantum mechanics qualitatively changes the behavior.   

Lohe \cite{Loh1, Loh2} proposed a simple model which was a non-Abelian generalization of the Kuramoto model. This model can be defined on any complex network where the variable at each node is an element of the unitary group, where a network of quantum oscillators in which quantum states are distributed among connected nodes by means of unitary transformations.  The complex system is given by $N$ quantum oscillators (``nodes") connected by a quantum network where the wavefunction at each node is distributed over quantum channels to all other connected nodes,  by for instance a quantum teleportation,  that infer on the evolution of the local wavefunction through a nonlinear interaction at a certain coupling strength.  In the Lohe paper \cite{Loh1}  the distribution of wavefunctions  is performed by constant unitary transformations that could in principle be implemented through fixed quantum circuits; for instance via optical interactions of single photons and atoms, that permit the distribution of entanglement across the network. 

Following the model in \cite{Loh1}, we consider a set of $N$ wave functions $\psi_1, \dotsc, \psi_N$ with $\psi_j:\R\times\R^d\to\C$, whose dynamics is described by the following system
\begin{equation*}
i\d_t\psi_j=-\frac12\Delta\psi_j+V_j\psi_j
+i\frac{K}{2N}\sum_{\ell=1}^N\left(\psi_\ell-\frac{\langle\psi_\ell, \psi_j\rangle}{\|\psi_j\|_{L^2}^2}\psi_j\right),
\qquad j=1, \dotsc, N,
\end{equation*}
Here the potentials $V_j$ are given by $V_j=V+\Omega_j$, where $V$ is an external potential acting on the whole set of wave functions and $\Omega_j\in\R$ are some given constants, playing the same role as the natural frequencies for the classical Kuramoto model.

In this paper we are going to study the asymptotic behavior of solutions to the above system and infer synchronization properties, see Definition \ref{def:sync} for a more precise statement of such properties.

As a first result we focus on the case of two non-identical oscillators. In this setup we give a complete picture of the asymptotic behavior for large times, depending on the relative size between the natural frequency and the coupling constant, see Theorem \ref{prop:two}. Then we study the general case $N\geq2$ with identical oscillators, i.e. $\Omega_j\equiv0$ for any $j=1, \dotsc, N$. 
Furthermore, by exploiting the results on the correlation functions, it is also possible to show synchronization in $H^1$, i.e. in the space of energy. This allows to infer the physically relevant result concerning the synchronization of the hydrodynamical quantities associated to the wave functions. We remark that this is formally equivalent to show the alignment of the velocity fields associated to each $\psi_j$.

To prove our results we are going to use two main ideas, the former is a finite dimensional reduction of the problem, which considers the auxiliary ODE system governing the dynamics of the correlations between the wave functions. The latter idea is to define an order parameter reminiscent of the analysis for classical Kuramoto model  and to study its asymptotic behavior in order to infer the synchronization properties for the whole Schr\"odinger-Lohe system.
To our knowledge our approach is completely new in the context of quantum synchronization, see also \cite{Pad}.

This paper is structured as follows: in Section 2 we state some preliminary results and we introduce the order parameter \eqref{eq:ord_par}. In Section 3 we determine the finite dimensional reduction, by writing down the ODE system satisfied by the correlation functions associated to the Schr\"odinger-Lohe system. In Section 4 we give our frequency synchronization result for the case $N=2$ and in Section 5 we show the phase synchronization in the case of identical oscillators. Finally in Section 6 we briefly comment on the alignment of the momenta associated to the wave functions.

\section{Preliminaries}
As already said, we consider the following Schr\"odinger-Lohe model
\begin{equation}\label{eq:SL1}
i\d_t\psi_j=-\frac12\Delta\psi_j+V_j\psi_j
+i\frac{K}{2N}\sum_{\ell=1}^N\left(\psi_\ell-\frac{\langle\psi_\ell, \psi_j\rangle}{\|\psi_j\|_{L^2}^2}\psi_j\right).
\qquad j=1, \dotsc, N,
\end{equation}
Here $\langle f, g\rangle:=\int_{\R^d}\bar f(x)g(x)\,dx$, and the potentials satisfy $V_j(x)=V+\Omega_j$, with $\Omega_j\in\R$, $K>0$.
In order to simplify the exposition in this note we will consider only space dependent potentials, namely $V\in L^{p}(\R^d)+L^\infty(\R^d)$, where $p>\max\{1, d/2\}$, however the same results stated here hold true also if we consider time-dependent potentials. It is possible to consider either integrable potentials, which can be treated as a perturbation of the Laplacian in the sense of Kato (see \cite{NS} for a quite general result) and potentials growing at most quadratically at spatial infinity, which cannot be treated as a perturbation of the Laplacian and require a more careful analysis, see for example \cite{Fuj, ADM}. 
Standard arguments from the theory of nonlinear Schr\"odinger equations, for instance see \cite{Caz}, yield to the global well-posedness of the system \eqref{eq:SL1} in $L^2(\R^d)$ and $H^1(\R^d)$.
\begin{proposition}\label{prop:lwp}
Let $\psi_{j, 0}\in L^2(\R^d)$ for any $j=1, \dotsc, N$, then there exists a unique global solution $(\psi_1, \dotsc, \psi_N)\in\mathcal C(\R_+; L^2(\R^d))$ to the system \eqref{eq:SL1}, with initial data 
$(\psi_1(0), \dotsc, \psi_N(0))=(\psi_{1, 0}, \dotsc, \psi_{N, 0})$. Furthermore the total mass of each individual wave function is conserved, i.e.
\begin{equation*}
\|\psi_j(t)\|_{L^2}=\|\psi_{j, 0}\|_{L^2},\qquad\forall\;t>0, j=1, \dotsc, N.
\end{equation*}
Moreover, if we assume $(\psi_{1, 0}, \dotsc, \psi_{N, 0})\in H^1(\R^d)$, then the corresponding global solution satisfies $(\psi_1, \dotsc, \psi_N)\in\mathcal C(\R_+;H^1(\R^d))$.
\end{proposition}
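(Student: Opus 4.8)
The plan is to prove the statement by the by-now-standard combination of a contraction argument in $L^2$, an a priori bound coming from mass conservation, and a Gronwall estimate at the $H^1$ level. \textbf{Step 1 (local theory in $L^2$).} Writing the system in Duhamel form,
$\psi_j(t)=e^{it(\frac12\Delta-V_j)}\psi_{j,0}+\frac{K}{2N}\int_0^t e^{i(t-s)(\frac12\Delta-V_j)}F_j(\psi(s))\,ds$,
where $F_j(\psi)=\sum_{\ell=1}^N\bigl(\psi_\ell-\frac{\langle\psi_\ell,\psi_j\rangle}{\|\psi_j\|_{L^2}^2}\psi_j\bigr)$, one uses that for $V\in L^p+L^\infty$ with $p>\max\{1,d/2\}$ the operator $-\frac12\Delta+V_j$ is self-adjoint and generates a unitary group on $L^2$ satisfying the usual Strichartz estimates (this is the Kato-type perturbation input, cf. \cite{NS,Caz}). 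The key observation is that $F_j$ is locally Lipschitz as a map $L^2\to L^2$ on the open set $\{\psi:\|\psi_j\|_{L^2}>0,\ j=1,\dots,N\}$: by Cauchy--Schwarz $\bigl\|\frac{\langle\psi_\ell,\psi_j\rangle}{\|\psi_j\|_{L^2}^2}\psi_j\bigr\|_{L^2}\le\|\psi_\ell\|_{L^2}$, and $(\psi_\ell,\psi_j)\mapsto\frac{\langle\psi_\ell,\psi_j\rangle}{\|\psi_j\|_{L^2}^2}\psi_j$ is smooth away from $\psi_j=0$, hence Lipschitz on any region where $\|\psi_j\|_{L^2}$ is bounded below and the masses are bounded above. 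A standard fixed point in $\mathcal C([0,T];L^2)$ then gives a unique local solution, with $T$ depending only on $\min_j\|\psi_{j,0}\|_{L^2}$ and $\max_j\|\psi_{j,0}\|_{L^2}$.

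\textbf{Step 2 (mass conservation and globalization).} Pairing the $j$-th equation with $\psi_j$ in $L^2$ and taking imaginary parts, the kinetic contribution $\langle\psi_j,\Delta\psi_j\rangle$ and the (real) potential contribution $\langle\psi_j,V_j\psi_j\rangle$ are real and drop, while the coupling contributes $\frac{K}{2N}\sum_\ell\RE\bigl(\langle\psi_j,\psi_\ell\rangle-\langle\psi_\ell,\psi_j\rangle\bigr)=0$ since $\langle\psi_\ell,\psi_j\rangle=\overline{\langle\psi_j,\psi_\ell\rangle}$. Hence $\frac{d}{dt}\|\psi_j(t)\|_{L^2}^2=0$, so $\|\psi_j(t)\|_{L^2}=\|\psi_{j,0}\|_{L^2}$ on the interval of existence. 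Consequently, as long as each $\psi_{j,0}\neq0$, the solution never leaves the region where the Lipschitz constants of the $F_j$ are controlled by the (fixed) masses, so the local existence time does not shrink and the solution extends to all $t>0$.

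\textbf{Step 3 ($H^1$ regularity).} The decisive algebraic feature is that $\langle\psi_\ell,\psi_j\rangle$ and $\|\psi_j\|_{L^2}^2$ are functions of $t$ only, so $\nabla F_j(\psi)=\sum_\ell\bigl(\nabla\psi_\ell-\frac{\langle\psi_\ell,\psi_j\rangle}{\|\psi_j\|_{L^2}^2}\nabla\psi_j\bigr)$; thus $F_j$ maps $H^1\to H^1$ with norm controlled by the conserved masses. Running the Duhamel/Strichartz scheme at the $H^1$ level (the multiplication by $V$ handled as in \cite{Caz} via H\"older and Sobolev embedding) and closing a Gronwall inequality for $\|\psi(t)\|_{H^1}$ on each finite interval — which is legitimate because the $L^2$ masses are already globally bounded — yields $(\psi_1,\dots,\psi_N)\in\mathcal C(\R_+;H^1(\R^d))$.

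The main obstacle is not the linear Schr\"odinger analysis, which is routine for Kato-class potentials, but the nonlocal self-interaction term $\frac{\langle\psi_\ell,\psi_j\rangle}{\|\psi_j\|_{L^2}^2}\psi_j$, which fails to be Lipschitz near $\psi_j=0$: one must guarantee that each $\|\psi_j(t)\|_{L^2}$ stays bounded away from zero along the flow. This is precisely what Step 2 supplies — the local solution keeps each mass exactly equal to its initial value — so the apparent circularity (needing a lower bound on the norm to build the solution that then conserves the norm) is resolved in the standard way, by deriving conservation on the local solution and feeding it back to obtain uniform-in-time Lipschitz bounds.
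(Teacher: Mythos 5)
Your proof is correct and follows essentially the same route as the paper: a standard contraction/Duhamel argument for local well-posedness (which the paper simply delegates to \cite{Caz}), the identical cancellation $\RE\left(\langle\psi_j,\psi_\ell\rangle-\langle\psi_\ell,\psi_j\rangle\right)=0$ for mass conservation, globalization via the conserved masses and the blow-up alternative, and persistence of regularity for the $H^1$ claim. Your added remarks on the failure of Lipschitz continuity of the coupling near $\psi_j=0$ and on why mass conservation resolves it are a useful elaboration of what the paper leaves implicit, but not a different method.
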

\begin{proof}
We omit here the proof of local well-posedness which is standard, see for instance the monograph \cite{Caz}. We only show the conservation of total mass for each wave function $\psi_j$. By differentiating the square of the $L^2-$norm with respect to time and by using equation \eqref{eq:SL1} we obtain
\begin{equation*}
\begin{aligned}
\frac{d}{dt}\left(\frac12\int|\psi_j(t, x)|^2\,dx\right)=&\int\RE\bigg\{\bar\psi_j\Big(\frac{i}{2}\Delta\psi_j-iV_j\psi_j
+\frac{K}{2N}\sum_{\ell=1}^N\left(\psi_\ell-\frac{\langle\psi_\ell, \psi_j\rangle}{\|\psi_j\|_{L^2}^2}\psi_j\right)\Big)\bigg\}\,dx\\
=&\frac{K}{2N}\sum_{\ell=1}^N\RE\left(\langle\psi_j, \psi_\ell\rangle-\frac{\langle\psi_\ell, \psi_j\rangle}{\|\psi_j\|_{L^2}^2}\|\psi_j\|_{L^2}^2\right)=0.
\end{aligned}
\end{equation*}
The conservation of the total masses, combined with the usual blow-up alternative, then yields the existence of global in time solutions to \eqref{eq:SL1}.
\newline
The existence of the global $H^1$ solution emanated from $H^1$ initial data follows from the global well-posedness in $L^2$ and the persistence of regularity property for nonlinear Schr\"odinger equations.
\end{proof}
Since the total mass of each wave function is conserved, then without loss of generality we may assume $\|\psi_j(0)\|_{L^2}=1$, for any $j=1, \dotsc, N$. Consequently the system \eqref{eq:SL1} can be written in the following way
\begin{equation}\label{eq:SL}
i\d_t\psi_j=-\frac12\Delta\psi_j+V_j\psi_j+i\frac{K}{2N}\sum_{\ell=1}^N\left(\psi_\ell-\langle\psi_\ell, \psi_j\rangle\psi_j\right).
\end{equation}
Furthermore, without loss of generality we may also assume that $\sum_j\Omega_j=0$. Indeed if we have
\begin{equation*}
\frac1N\sum_j\Omega_j=\alpha\neq0,
\end{equation*}
we can define
\begin{equation*}
\psi'_j(t, x)=e^{-i\alpha t}\psi_j(t, x),\qquad\forall\;j=1, \dotsc, N.
\end{equation*}
and we can see that the transformed wave functions satisfy the following Schr\"odinger-Lohe system
\begin{equation*}
i\d_t\psi'_j=-\frac12\Delta\psi'_j+\tilde V_j\psi'_j+i\frac{K}{2N}\sum_{\ell=1}^N\left(\psi'_\ell-\langle\psi'_\ell, \psi'_j\rangle\psi'_j\right),
\end{equation*}
where $\tilde V_j=V+\tilde\Omega_j$, $\tilde\Omega_j=\Omega_j-\alpha$ and $\frac1N\sum_j\tilde\Omega_j=\frac1N\sum_j\Omega_j-\alpha=0$. Hence from now on we will assume 
\begin{equation}\label{eq:renorm}
\sum_j\Omega_j=0.
\end{equation}
In order to show the synchronization properties for the system \eqref{eq:SL} we introduce, in analogy with the classical Kuramoto model, the following order parameter
\begin{equation}\label{eq:ord_par}
\zeta:=\frac1N\sum_{\ell=1}^N\psi_\ell.
\end{equation}
The concept of complex order parameter was introduced by Kuramoto (see for instance \cite{Kur}, chapt. 5 ) in analogy to the analysis of thermodynamic phase transitions. 
By using the definition in \eqref{eq:ord_par} it is then possible to write the equations in \eqref{eq:SL} in the following way
\begin{equation}\label{eq:SL2}
i\d_t\psi_j=-\frac12\Delta\psi_j+V_j\psi_j+i\frac{K}{2}\left(\zeta-\langle\zeta, \psi_j\rangle\psi_j\right).
\end{equation}
Furthermore, by averaging the equations in \eqref{eq:SL2} we also write the governing dynamics for the order parameter,
\begin{equation}\label{eq:op}
i\d_t\zeta=-\frac12\Delta\zeta+V\zeta+\frac1N\sum_{\ell=1}^N\Omega_\ell\psi_\ell+i\frac{K}{2}\left(\zeta-\frac1N\sum_{\ell=1}^N\langle\zeta, \psi_\ell\rangle\psi_\ell\right).
\end{equation}

\begin{definition}\label{def:sync}
We say that the system \eqref{eq:SL1} exhibits \emph{complete frequency synchronization} if
\begin{equation*}
\lim_{t\to\infty}\|\psi_j(t)-\psi_k(t)\|_{L^2}=c_{jk},\quad\forall\;j, k=1, \dotsc, N
\end{equation*}
and 
\begin{equation*}
\lim_{t\to\infty}\|\zeta(t)\|_{L^2}=c\in(0, 1).
\end{equation*}
We say that the system \eqref{eq:SL1} exhibits \emph{complete phase synchronization} if
\begin{equation*}
\lim_{t\to\infty}\|\psi_j(t)-\psi_k(t)\|_{L^2}=0,\quad\forall\;j, k=1, \dotsc, N,
\end{equation*}
and 
\begin{equation*}
\lim_{t\to\infty}\|\zeta(t)\|_{L^2}=1.
\end{equation*}
\end{definition}
\begin{remark}
From the definition of the order parameter $\zeta$, it is straightforward to see that $\|\zeta\|_{L^2}=1$, if and only if $\|\psi_j-\psi_k\|_{L^2}=0$ for any $j, k=1, \dotsc, N$.
Indeed we have
\begin{equation*}
1-\|\zeta\|_{L^2}^2=\frac{1}{2N^2}\sum_{j, k=1}^N\|\psi_j-\psi_k\|_{L^2}^2.
\end{equation*}
\end{remark}
\section{The system of ODEs for the correlation functions}
To infer the synchronization properties for the Schr\"odinger-Lohe model \eqref{eq:SL} we look at the correlations between the wave functions in the system; for this purpose we define
\begin{equation*}
r_{jk}:=\RE\langle\psi_j, \psi_k\rangle, \quad s_{jk}:=\IM\langle\psi_j, \psi_k\rangle.
\end{equation*}
By the definition it follows $r_{jk}=r_{kj}, s_{jk}=-s_{kj}$, for any $j, k=1, \dotsc, N$. 

By using the system \eqref{eq:SL}, it is straightforward to deduce the system of ODEs describing the coupled dynamics for $r_{jk}, s_{jk}$, namely
\begin{equation}\label{eq:ODE_full}
\left\{\begin{aligned}
\frac{d}{dt}r_{jk}=&-(\Omega_j-\Omega_k)s_{jk}+\frac{K}{2N}\sum_{\ell=1}^N\left[(r_{j\ell}+r_{\ell k})(1-r_{jk})+(s_{j\ell}+s_{\ell k})s_{jk}\right]\\
\frac{d}{dt}s_{jk}=&(\Omega_j-\Omega_k)r_{jk}+\frac{K}{2N}\sum_{\ell=1}^N\left[-(r_{j\ell}+r_{\ell k})s_{jk}+(s_{j\ell}+s_{\ell k})(1-r_{jk})\right].
\end{aligned}\right.
\end{equation}
The derivation of \eqref{eq:ODE_full} is similar to what will be done later in Proposition \ref{prop:macro_corr}, hence we omit the details for \eqref{eq:ODE_full} and we address the reader to Remark 
\ref{rmk:ODE_full}. The above system becomes very simple in the case of two oscillators; indeed for $N=2$ we have
\begin{equation}\label{eq:ODE_two}
\left\{\begin{aligned}
\frac{d}{dt}r_{12}=&-2\Omega s_{12}+\frac{K}{2}(1-r_{12}^2+s_{12}^2)\\
\frac{d}{dt}s_{12}=&2\Omega r_{12}-Kr_{12}s_{12},
\end{aligned}\right.
\end{equation}
which can be studied directly, see Section \ref{sect:two} for more details.

However, for $N\geq3$ the system becomes more complicated and it will be more convenient to deal with the correlations of the wave functions with the order parameter.
For this purpose we also define
\begin{equation}\label{eq:macro_corr}
\tilde r_j:=\RE\langle\zeta, \psi_j\rangle=\frac1N\sum_{\ell=1}^Nr_{\ell j},\qquad\tilde s_j:=\IM\langle\zeta, \psi_j\rangle=\frac1N\sum_{\ell=1}^Ns_{\ell j}.
\end{equation}
These functions can be interpreted as macroscopic quantities measuring the correlation between the j-th wave function $\psi_j$ and the order parameter $\zeta$.
This definition readily implies that
\begin{equation*}
\frac{1}{N}\sum_{j=1}^N\tilde r_j=\|\zeta\|_{L^2}^2,\qquad\frac1N\sum_{j=1}^N\tilde s_j=0.
\end{equation*}
\begin{remark}
By using \eqref{eq:op} we can infer the ODE satisfied by the $L^2$ norm of the order parameter, namely
\begin{equation*}
\frac{d}{dt}\|\zeta(t)\|_{L^2}^2=\frac{2}{N}\sum_{\ell=1}^N\Omega_\ell\tilde s_\ell+K\left(\|\zeta(t)\|_{L^2}^2-\frac1N\sum_{\ell=1}^N(\tilde r_\ell^2-\tilde s_\ell^2)\right).
\end{equation*}
However it seems at the moment that no much information about complete synchronization can be obtained from the previous ODE. This fact is in strong contrast with the classical Kuramoto model, see for example \cite{BCM}, where the behavior of the modulus of the order parameter allows to infer the synchronization property for the whole system.
\end{remark}
In order to show the complete synchronization we are going to study the dynamics of the macroscopic correlations $\tilde r_j, \tilde s_j$. As it will be clear from the statement of next Proposition, those quantities do not satisfy a closed system of ODEs, nevertheless the analysis of this system will be sufficient to deduce the complete synchronization property for the Schr\"odinger-Lohe model \eqref{eq:SL}.
\begin{proposition}\label{prop:macro_corr}
Let $\psi_j$ solve the Schr\"odinger-Lohe system \eqref{eq:SL}, then the macroscopic correlations satisfy
\begin{equation}\label{eq:ODE_corr}
\left\{\begin{aligned}
\frac{d}{dt}\tilde r_j=&\Omega_j\tilde s_j-\frac1N\sum_{\ell=1}^N\Omega_\ell s_{\ell j}+\frac{K}{2}\left[\tilde r_j-\tilde r_j^2+\tilde s_j^2+\frac1N\sum_{\ell=1}^N(\tilde r_\ell-\tilde r_\ell r_{\ell j}-\tilde s_\ell s_{\ell j})\right]\\
\frac{d}{dt}\tilde s_j=&-\Omega_j\tilde r_j+\frac1N\sum_{\ell=1}^N\Omega_\ell r_{\ell j}+\frac{K}{2}\left[\tilde s_j-2\tilde r_j\tilde s_j+\frac1N\sum_{\ell=1}^N(r_{\ell j}\tilde s_\ell-s_{\ell j}\tilde r_\ell)\right].
\end{aligned}\right.
\end{equation}
\end{proposition}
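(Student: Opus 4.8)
The plan is to differentiate the scalar products $\langle\zeta,\psi_j\rangle$ in time and then separate real and imaginary parts. By \eqref{eq:macro_corr} we have $\tilde r_j+i\tilde s_j=\langle\zeta,\psi_j\rangle$, so it suffices to establish a single complex ODE for $\langle\zeta,\psi_j\rangle$ and read off its real and imaginary components. By the Leibniz rule $\frac{d}{dt}\langle\zeta,\psi_j\rangle=\langle\d_t\zeta,\psi_j\rangle+\langle\zeta,\d_t\psi_j\rangle$, and I would substitute $\d_t\psi_j$ from \eqref{eq:SL2} and $\d_t\zeta$ from \eqref{eq:op}. For $H^1$ data this manipulation is legitimate by Proposition \ref{prop:lwp} (interpreting the Laplacian terms through the $H^{-1}$--$H^1$ duality, i.e. $\langle\zeta,\Delta\psi_j\rangle=-\langle\nabla\zeta,\nabla\psi_j\rangle$), and for $L^2$ data one argues by density from smooth solutions.

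The key observation is that the linear part $-\frac12\Delta+V$ contributes nothing: since $\Delta$ and multiplication by the real potential $V$ are symmetric, the terms coming from $-\frac12\Delta\zeta+V\zeta$ in $\langle\d_t\zeta,\psi_j\rangle$ cancel exactly with those coming from $-\frac12\Delta\psi_j+V\psi_j$ in $\langle\zeta,\d_t\psi_j\rangle$ (concretely $\langle\Delta\zeta,\psi_j\rangle=-\langle\nabla\zeta,\nabla\psi_j\rangle=\langle\zeta,\Delta\psi_j\rangle$, and likewise for $V$). What survives are, first, the natural-frequency terms $-i\Omega_j\langle\zeta,\psi_j\rangle$ from the $\psi_j$ equation and $\frac{i}{N}\sum_\ell\Omega_\ell\langle\psi_\ell,\psi_j\rangle$ from the $\zeta$ equation; and second, the nonlinear terms $\frac{K}{2}\big(\|\zeta\|_{L^2}^2-\langle\zeta,\psi_j\rangle^2\big)$ from the $\psi_j$ equation and $\frac{K}{2}\big(\langle\zeta,\psi_j\rangle-\frac1N\sum_\ell\overline{\langle\zeta,\psi_\ell\rangle}\langle\psi_\ell,\psi_j\rangle\big)$ from the $\zeta$ equation. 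Collecting everything gives the complex identity
\begin{equation*}
\frac{d}{dt}\langle\zeta,\psi_j\rangle=-i\Omega_j\langle\zeta,\psi_j\rangle+\frac{i}{N}\sum_{\ell=1}^N\Omega_\ell\langle\psi_\ell,\psi_j\rangle+\frac{K}{2}\Big[\,\|\zeta\|_{L^2}^2+\langle\zeta,\psi_j\rangle-\langle\zeta,\psi_j\rangle^2-\frac1N\sum_{\ell=1}^N\overline{\langle\zeta,\psi_\ell\rangle}\,\langle\psi_\ell,\psi_j\rangle\,\Big].
\end{equation*}

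Finally I would take real and imaginary parts. Writing $\langle\zeta,\psi_j\rangle=\tilde r_j+i\tilde s_j$ and $\langle\psi_\ell,\psi_j\rangle=r_{\ell j}+is_{\ell j}$, using $\|\zeta\|_{L^2}^2=\frac1N\sum_\ell\tilde r_\ell$ (which follows from \eqref{eq:ord_par} together with $\frac1N\sum_\ell\tilde s_\ell=0$), and expanding $\langle\zeta,\psi_j\rangle^2=\tilde r_j^2-\tilde s_j^2+2i\tilde r_j\tilde s_j$ and $\overline{\langle\zeta,\psi_\ell\rangle}\langle\psi_\ell,\psi_j\rangle=(\tilde r_\ell r_{\ell j}+\tilde s_\ell s_{\ell j})+i(\tilde r_\ell s_{\ell j}-\tilde s_\ell r_{\ell j})$, one regroups the terms to obtain precisely \eqref{eq:ODE_corr}. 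This last step is pure bookkeeping; there is no deep obstacle, and the only point that genuinely needs care is the justification of the Leibniz rule and of the self-adjoint cancellation at the regularity level available, which is standard. As an alternative route that avoids differentiating $\zeta$, one can instead average the full system \eqref{eq:ODE_full} over the first index, using $\tilde r_j=\frac1N\sum_\ell r_{\ell j}$, $\tilde s_j=\frac1N\sum_\ell s_{\ell j}$ and the antisymmetry $s_{\ell m}=-s_{m\ell}$; this reproduces the same identities and is the counterpart of Remark \ref{rmk:ODE_full}.
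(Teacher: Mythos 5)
Your proposal is correct and follows essentially the same route as the paper: differentiate $\langle\zeta,\psi_j\rangle$ using \eqref{eq:SL2} and \eqref{eq:op}, observe that the self-adjoint linear part $-\frac12\Delta+V$ cancels (the paper phrases this as the integral of a divergence of a current density vanishing), arrive at the same complex ODE \eqref{eq:corr} after using $\|\zeta\|_{L^2}^2=\frac1N\sum_\ell\langle\psi_\ell,\zeta\rangle$, and separate real and imaginary parts. The bookkeeping in your final step checks out against \eqref{eq:ODE_corr}.
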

\begin{proof}
By considering the equations for $\psi_j$ \eqref{eq:SL2} and $\zeta$ \eqref{eq:op} we may write
\begin{equation*}
\begin{aligned}
\d_t(\bar\zeta\psi_j)=&\left(-\frac{i}{2}\Delta\bar\zeta+iV\bar\zeta+\frac{i}{N}\sum_{\ell=1}^N\Omega_\ell\bar\psi_\ell+\frac{K}{2}\left(\bar\zeta-\frac1N\sum_{\ell=1}^N\langle\psi_\ell, \zeta\rangle\bar\psi_\ell\right)\right)\psi_j\\
&+\bar\zeta\left(\frac{i}{2}\Delta\psi_j-iV_j\psi_j+\frac{K}{2}\left(\zeta-\langle\zeta,\psi_j\rangle\psi_j\right)\right)\\
=&-\frac{i}{2}\left(\psi_j\Delta\bar\zeta-\bar\zeta\Delta\psi_j\right)-i\Omega_j\bar\zeta\psi_j+\frac{i}{N}\sum_{\ell=1}^N\Omega_\ell\bar\psi_\ell\psi_j\\
&+\frac{K}{2}\left(\left(1-\langle\zeta, \psi_j\rangle\right)\bar\zeta\psi_j+|\zeta|^2-\frac1N\sum_{\ell=1}^N\langle\psi_\ell, \zeta\rangle\bar\psi_\ell\psi_j\right).
\end{aligned}
\end{equation*}
We notice that the first term on the right hand side is the divergence of a current density,
\begin{equation*}
-\frac{i}{2}\left(\psi_j\Delta\bar\zeta-\bar\zeta\Delta\psi_j\right)=-\diver\left(\frac{i}{2}\left(\psi_j\nabla\bar\zeta-\bar\zeta\nabla\psi_j\right)\right),
\end{equation*}
so that by integrating the above expression on the whole space we obtain
\begin{equation}\label{eq:corr}
\begin{aligned}
\frac{d}{dt}\langle\zeta, \psi_j\rangle=&-i\Omega_j\langle\zeta, \psi_j\rangle+\frac{i}{N}\sum_{\ell=1}^N\Omega_\ell\langle\psi_\ell, \psi_j\rangle\\
&+\frac{K}{2}\left(\left(1-\langle\zeta, \psi_j\rangle\right)\langle\zeta, \psi_j\rangle+\frac1N\sum_{\ell=1}^N\langle\psi_\ell, \zeta\rangle(1-\langle\psi_\ell, \psi_j\rangle)\right),
\end{aligned}
\end{equation}
where we also used the fact that $\|\zeta\|_{L^2}^2=\frac1N\sum_{\ell}\langle\psi_\ell, \zeta\rangle$. By using the definitions of the correlation functions above and by separating the real and imaginary parts in \eqref{eq:corr} we then get \eqref{eq:ODE_corr}.
\end{proof}
\begin{remark}\label{rmk:ODE_full}
Analogously to system \eqref{eq:ODE_corr}, it is possible to derive the system of ODEs \eqref{eq:ODE_full} proceeding in a similar way and by combining equations \eqref{eq:SL2}. We only need to consider the set of ODEs satisfied by the correlation functions
\begin{equation*}
z_{jk}(t):=\langle\psi_j, \psi_k\rangle(t)=r_{jk}(t)+is_{jk}(t).
\end{equation*}
\end{remark}
\section{$L^2$ and $H^1$ complete synchronization for the two oscillator case}\label{sect:two}
In this Section we focus on the case of two non-identical oscillators. namely $N=2$ and $V_1\neq V_2$. First of all let us notice that, because of the condition \eqref{eq:renorm}, we can choose $\Omega_1=-\Omega_2=:\Omega\geq0$, so that the Schr\"odinger-Lohe system \eqref{eq:SL} becomes
\begin{equation}\label{eq:SLtwo}
\left\{\begin{aligned}
i\d_t\psi_1=&-\frac12\Delta\psi_1+V\psi_1+\Omega\psi_1+i\frac{K}{4}\left(\psi_2-\langle\psi_2, \psi_1\rangle\psi_1\right)\\
i\d_t\psi_2=&-\frac12\Delta\psi_2+V\psi_2-\Omega\psi_2+i\frac{K}{4}\left(\psi_1-\langle\psi_1, \psi_2\rangle\psi_2\right).
\end{aligned}\right.
\end{equation}
In this case, without any further assumptions, we are going to give a complete picture of synchronization properties for system \eqref{eq:SLtwo}, both in the $L^2$ and $H^1$ setup.

The result in $H^1$ also yields the physically relevant property of synchronization for the two momenta (probability and current densities), defined via the Madelung transformation.
To our knowledge it is the first result in this direction.

Next Theorem shows in which regimes system \eqref{eq:SLtwo} exhibits synchronization and in which other regimes it is not possible to have any synchronization result. Furthermore we also give the optimal convergence rates. As we will show, this result depends on the parameter 
$\Lambda:=\frac{2\Omega}{K}\geq0$ measuring the relative size between the natural frequency and the coupling constant.
\begin{theorem}\label{prop:two}
Let $\psi_{1, 0}, \psi_{2, 0}\in L^2(\R^d)$. Then, depending on the relative size between the coupling constant and the natural frequency, we have the following scenario:
\begin{enumerate}
\item for $0\leq\Lambda<1$, if $\psi_{1, 0}, \psi_{2, 0}$ are such that
\begin{equation*}
\langle\psi_{1, 0}, \psi_{2, 0}\rangle\neq-\sqrt{1-\Lambda^2}+i\Lambda,
\end{equation*}
then we have complete synchronization,
\begin{equation}\label{eq:gdr_two}
\|e^{i\phi}\psi_1(t)-\psi_2(t)\|_{L^2}\lesssim e^{-\sqrt{K^2-4\Omega^2}t},\quad\textrm{as}\;t\to\infty
\end{equation}
and 
\begin{equation*}
\lim_{t\to\infty}\|\psi_1(t)-\psi_2(t)\|_{L^2}=\left|1-e^{i\phi}\right|,
\end{equation*}
where $\phi$ is defined by $\arcsin \Lambda$ = $\phi$
\item for $\Lambda=1$, if $\psi_{1, 0}, \psi_{2, 0}$ are such that
\begin{equation*}
\langle\psi_{1, 0}, \psi_{2, 0}\rangle\neq i,
\end{equation*}
then
\begin{equation*}
\|i\psi_1(t)-\psi_2(t)\|_{L^2}\lesssim t^{-1},\quad\textrm{as}\;t\to\infty
\end{equation*}
and 
\begin{equation*}
\lim_{t\to\infty}\|\psi_1(t)-\psi_2(t)\|_{L^2}=\sqrt2;
\end{equation*}
\item for $\Lambda>1$, the correlations are periodic in time.
\end{enumerate}
\end{theorem}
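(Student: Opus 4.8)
The plan is to reduce everything to the two-dimensional autonomous ODE system \eqref{eq:ODE_two} for $(r_{12},s_{12})$ and analyze its phase portrait, since by Proposition \ref{prop:lwp} the wave functions have unit $L^2$ norm and hence $\|\psi_1-\psi_2\|_{L^2}^2 = 2(1-r_{12})$, so the asymptotics of $\|\psi_1(t)-\psi_2(t)\|_{L^2}$ is governed entirely by $\lim_{t\to\infty} r_{12}(t)$. First I would observe that $z_{12}=r_{12}+is_{12}=\langle\psi_1,\psi_2\rangle$ satisfies $|z_{12}|\le 1$ by Cauchy--Schwarz, so the dynamics takes place in the closed unit disk; one should check the disk is invariant (the vector field points inward on $r_{12}^2+s_{12}^2=1$, which follows from the explicit form of \eqref{eq:ODE_two}). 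Next I would locate the equilibria: setting the right-hand sides to zero and using $\Lambda=2\Omega/K$, the stationary points solve $s_{12}=\Lambda r_{12}$ (from the second equation, assuming $r_{12}\ne 0$) together with $1-r_{12}^2+s_{12}^2 = \frac{2\Omega}{K} \cdot \frac{2 s_{12}}{r_{12}}$; substituting gives $r_{12}^2(1+\Lambda^2)=1$, so $r_{12}=\pm 1/\sqrt{1+\Lambda^2}$ and $s_{12}=\pm\Lambda/\sqrt{1+\Lambda^2}$. In terms of the angle $\phi=\arcsin\Lambda$ these are exactly $z_{12}=e^{i\phi}$ (the stable node, giving synchronization) and $z_{12}=-e^{-i\phi}\cdot$(something)---more precisely the second equilibrium is $-\sqrt{1-\Lambda^2}+i\Lambda$ when $\Lambda<1$, matching the excluded initial datum in case (1). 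Wait—I need to be careful: the relation $s=\Lambda r$ with $r<0$ gives $-\sqrt{1-\Lambda^2}$ only after re-expressing; the point is that for $\Lambda<1$ there are exactly two equilibria on the unit circle, one attracting and one a saddle/repeller.

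For case (1), $0\le\Lambda<1$: I would linearize at the two equilibria. At $z_{12}=e^{i\phi}$ the Jacobian should have both eigenvalues with negative real part, with the relevant decay rate $-\sqrt{K^2-4\Omega^2}$ coming out of the eigenvalue computation (note $K\sqrt{1-\Lambda^2}=\sqrt{K^2-4\Omega^2}$); at the other point the Jacobian has a positive eigenvalue, so it is unstable. Then a Poincaré--Bendixson / Lyapunov argument on the disk (which contains only these two equilibria and no periodic orbits—one can rule out periodic orbits via Bendixson--Dulac by computing the divergence of the vector field, or by a monotonicity argument) shows every trajectory not starting at the repeller converges to $e^{i\phi}$, and the linearization gives the exponential rate \eqref{eq:gdr_two}. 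From $r_{12}(t)\to\cos\phi$ one gets $\|\psi_1-\psi_2\|_{L^2}^2\to 2(1-\cos\phi)=|1-e^{i\phi}|^2$; the rate statement for $\|e^{i\phi}\psi_1-\psi_2\|_{L^2}$ follows since $\|e^{i\phi}\psi_1-\psi_2\|_{L^2}^2 = 2-2\RE(e^{-i\phi}z_{12}) = 2-2\RE(e^{-i\phi}e^{i\phi}) + O(|z_{12}-e^{i\phi}|) \to 0$ at the linearized rate.

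For case (2), $\Lambda=1$ (so $\phi=\pi/2$): the two equilibria from case (1) merge into the single degenerate equilibrium $z_{12}=i$, and the linearization becomes degenerate (a zero eigenvalue), so the convergence is only polynomial. Here I would pass to a local normal form near $z_{12}=i$: writing $z_{12}=i+w$ and expanding, the dynamics along the center direction should be governed by something like $\dot u \sim -c u^2$, whose solution decays like $t^{-1}$, yielding the stated rate; the other direction is still exponentially contracting. The limit $\|\psi_1-\psi_2\|_{L^2}^2\to 2(1-0)=2$ follows since $r_{12}\to 0$. For case (3), $\Lambda>1$ (so $2\Omega>K$): now there are no equilibria in the open disk at all (the candidate $r_{12}^2(1+\Lambda^2)=1$ still exists but one must check whether the full stationarity conditions including $r_{12}=0$ fail, or more simply that the rotational term $2\Omega r_{12}$ dominates); I expect that $r_{12}=0$ is also not an equilibrium, so by Poincaré--Bendixson the compact invariant disk must contain a periodic orbit, and an explicit conserved quantity or a direct integration of \eqref{eq:ODE_two} should show all orbits are periodic. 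The main obstacle I anticipate is case (2): the degenerate $t^{-1}$ rate requires a careful center-manifold / normal-form computation rather than soft linear theory, and one must confirm the sign of the quadratic coefficient to get a genuine (not merely $O(t^{-1})$ upper but actual) decay; ruling out periodic orbits in case (1) via Bendixson--Dulac may also require choosing a nontrivial Dulac multiplier.
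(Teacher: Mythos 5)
Your reduction to the planar system \eqref{eq:ODE_two} is the right starting point, and in spirit it matches the paper, which works with the single complex unknown $z=\langle\psi_1,\psi_2\rangle$ satisfying the holomorphic Riccati equation $\dot z=2i\Omega z+\tfrac{K}{2}(1-z^2)=-\tfrac{K}{2}(z-z_+)(z-z_-)$ and integrates it in closed form. However, your qualitative analysis has a concrete error at its core. From the second equation of \eqref{eq:ODE_two} one has $\dot s_{12}=r_{12}(2\Omega-Ks_{12})$, so the stationarity condition is $r_{12}=0$ \emph{or} $s_{12}=\Lambda$, not $s_{12}=\Lambda r_{12}$. The branch $s_{12}=\Lambda$ gives the equilibria $z_\pm=\pm\sqrt{1-\Lambda^2}+i\Lambda$ (not $\pm(1+i\Lambda)/\sqrt{1+\Lambda^2}$); you eventually guess the correct second equilibrium, but the branch $r_{12}=0$ that you discard is fatal for case (3): for $\Lambda>1$ it produces the equilibrium $z=i\bigl(\Lambda-\sqrt{\Lambda^2-1}\bigr)$, which lies \emph{inside} the open unit disk since $\Lambda-\sqrt{\Lambda^2-1}=(\Lambda+\sqrt{\Lambda^2-1})^{-1}\in(0,1)$. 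Hence the disk is not equilibrium-free, Poincar\'e--Bendixson does not yield a periodic orbit, and even if it did it would give \emph{one} periodic orbit rather than the claimed periodicity of \emph{all} correlations.

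There are further soft spots: the unit circle is invariant rather than strictly attracting inward, since $\tfrac{d}{dt}|z|^2=K\,\RE(z)\,(1-|z|^2)$ vanishes on $|z|=1$; in case (2) there is no ``exponentially contracting other direction,'' because the linearization at $z=i$ vanishes identically (both eigenvalues are $0$); and your Bendixson--Dulac step in case (1) is not routine, as the divergence of the field is $-2Kr_{12}$, which changes sign in the disk. All of these difficulties evaporate if you exploit the Riccati structure as the paper does: the quantity $u=(z-z_+)/(z-z_-)$ satisfies the \emph{linear} equation $\dot u=-\tfrac{K}{2}(z_+-z_-)u$. For $\Lambda<1$, $z_+-z_-=2\sqrt{1-\Lambda^2}>0$, so $u\to0$ and $z\to z_+=e^{i\phi}$ at the exact rate $K\sqrt{1-\Lambda^2}=\sqrt{K^2-4\Omega^2}$ for every $z(0)\neq z_-$ (global convergence and the rate in \eqref{eq:gdr_two} come for free, with no need to rule out periodic orbits). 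For $\Lambda=1$ the roots coalesce and $w=z-i$ satisfies exactly $\dot w=-\tfrac{K}{2}w^2$, giving the $t^{-1}$ rate with no normal-form computation. For $\Lambda>1$, $z_+-z_-$ is purely imaginary, so $|u|$ is conserved and every solution is periodic. I recommend replacing the phase-portrait argument by this explicit integration; your final norm identities $\|e^{i\phi}\psi_1-\psi_2\|_{L^2}^2=2\bigl(1-\RE(e^{-i\phi}z)\bigr)$ and $\|\psi_1-\psi_2\|_{L^2}^2=2(1-r_{12})$ are correct and complete the proof as in the paper.
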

\begin{proof}
By using the equations in \eqref{eq:SLtwo} we can derive the ODE satisfied by the correlation between the two wave functions. If we define $z(t):=\langle\psi_1, \psi_2\rangle(t)$, then
\begin{equation}\label{eq:ODE_z}
\dot z=2i\Omega+\frac{K}{2}(1-z^2).
\end{equation}
Under the assumption $\Lambda<1$, the two stationary points $z_1=\sqrt{1-\Lambda^2}+i\Lambda=:e^{i\phi}$, $z_2=-\sqrt{1-\Lambda^2}+i\Lambda$, are symmetric with respect to the imaginary axis. By assuming $z(0)\neq z_{1, 2}$, then it is straightforward to see that the solution $z(t)$ to \eqref{eq:ODE_z} is given by
\begin{equation*}
z(t)=\frac{e^{i\phi}+e^{-i\phi}\frac{z(0)-e^{i\phi}}{z(0)+e^{-i\phi}}e^{-\sqrt{K^2-4\Omega^2}t}}{1-\frac{z(0)-e^{i\phi}}{z(0)+e^{i\phi}}e^{-\sqrt{K^2-4\Omega^2}t}},
\end{equation*}
so that $|z(t)-e^{i\phi}|\lesssim e^{-\sqrt{K^2-4\Omega^2}t}$. Thus, since $\|\psi_1\|_{L^2}=\|\psi_2\|_{L^2}=1$, we have
\begin{equation*}
\|e^{i\phi}\psi_1(t)-\psi_2(t)\|_{L^2}^2=2\left(1-\RE(e^{-i\phi}z(t))\right)\lesssim e^{-\sqrt{K^2-4\Omega^2}t},
\end{equation*}
and consequently
\begin{equation*}
\lim_{t\to\infty}\|\psi_1(t)-\psi_2(t)\|_{L^2}=|1-e^{i\phi}|.
\end{equation*}
On the other hand, let us assume $\Lambda=1$, then we have $z_1=z_2=i$ and in this case the solution to \eqref{eq:ODE_z} is given by
\begin{equation*}
z(t)=i+\left(\frac{K}{2}t+\frac{1}{z(0)-i}\right)^{-1}.
\end{equation*}
Finally, we see that for $\Lambda>1$ the solution is time periodic.
\end{proof}

\begin{remark}
Let us consider again the ODE in \eqref{eq:ODE_z}, if we formally write $z=re^{i\theta}$, then we see that the pair $(r, \theta)$ solves the following system
\begin{equation*}
\left\{\begin{aligned}
\dot r=&\frac{K}{2}\cos\theta(1-r^2)\\
\dot\theta=&2\Omega-K\sin\theta+\frac{K}{2}(1-r^2)\sin\theta,
\end{aligned}\right.
\end{equation*}
so when $r\equiv1$, we recover the classical Kuramoto model for two oscillators. It implies, in particular, that all solutions $z(t)=e^{i\theta(t)}$, where $\theta(t)$ is a solution of the classical Kuramoto model, are also solutions to \eqref{eq:ODE_z}.

On the other hand, if we write $z(t)=r_{12}(t)+is_{12}(t)$, $r_{12}(t)=\RE\langle\psi_1, \psi_2\rangle(t)$, $s_{12}(t)=\IM\langle\psi_1, \psi_2\rangle(t)$, then by separating the real and imaginary parts in \eqref{eq:ODE_z} we recover system \eqref{eq:ODE_two}.
\end{remark}
Next Proposition explictily gives the asymptotically synchronized states for the Schr\"odinger-Lohe system \eqref{eq:SLtwo}, when  $N=2$.
\begin{proposition}
Let $\psi_{1, 0}, \psi_{2, 0}\in L^2(\R^d)$ and let us assume that $0\leq\Lambda<1$, $\langle\psi_{1, 0}, \psi_{2, 0}\rangle\neq-\sqrt{1-\Lambda^2}+i\Lambda$. Then we have
\begin{equation*}
\lim_{t\to\infty}\|\psi_1(t)-e^{-i(-\frac12\Delta+V)t}\tilde\psi_1\|_{L^2}=\lim_{t\to\infty}\|\psi_2(t)-e^{-i(-\frac12\Delta+V)t}\tilde\psi_2\|_{L^2}=0,
\end{equation*}
where
\begin{equation*}
\begin{aligned}
\tilde\psi_1=&\psi_{1, 0}-i\int_0^\infty e^{i(-\frac12\Delta+V)s}\left(\Omega\psi_1+i\frac{K}{4}(\psi_2\langle\psi_2, \psi_1\rangle\psi_1)\right)(s)\,ds,\\
\tilde\psi_2=&\psi_{2, 0}-i\int_0^\infty e^{i(-\frac12\Delta+V)s}\left(-\Omega\psi_2+i\frac{K}{4}(\psi_1\langle\psi_1, \psi_2\rangle\psi_2)\right)(s)\,ds.
\end{aligned}
\end{equation*}
\end{proposition}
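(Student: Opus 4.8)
The plan is to rewrite \eqref{eq:SLtwo} in Duhamel form relative to the propagator $U(t):=e^{-i(-\frac12\Delta+V)t}$ of the Schr\"odinger operator $-\frac12\Delta+V$ --- which, under the standing hypotheses on $V$, is self-adjoint and hence generates a strongly continuous unitary group on $L^2(\R^d)$ --- and then to prove that $U(-t)\psi_j(t)$ converges in $L^2$ as $t\to\infty$. Introduce the source terms appearing in \eqref{eq:SLtwo},
\[
F_1:=\Omega\psi_1+i\frac{K}{4}\bigl(\psi_2-\langle\psi_2,\psi_1\rangle\psi_1\bigr),\qquad
F_2:=-\Omega\psi_2+i\frac{K}{4}\bigl(\psi_1-\langle\psi_1,\psi_2\rangle\psi_2\bigr),
\]
so that \eqref{eq:SLtwo} reads $i\d_t\psi_j=(-\tfrac12\Delta+V)\psi_j+F_j$; by Proposition \ref{prop:lwp} together with mass conservation, each $F_j$ lies in $\mathcal C(\R_+;L^2(\R^d))$. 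Duhamel's formula then gives $U(-t)\psi_j(t)=\psi_{j,0}-i\int_0^tU(-s)F_j(s)\,ds$, so the claim is equivalent to the convergence in $L^2$ of $\int_0^\infty U(-s)F_j(s)\,ds=\int_0^\infty e^{i(-\frac12\Delta+V)s}F_j(s)\,ds$, the limits being exactly the $\tilde\psi_j$ in the statement. Since $U$ is unitary, it suffices to show $s\mapsto\|F_j(s)\|_{L^2}\in L^1(\R_+)$.

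The only non-routine step is to exploit a cancellation: although $\|F_1(s)\|_{L^2}$ is a priori merely bounded (the summand $\Omega\psi_1$ alone has norm $\Omega$), its non-decaying part vanishes in the regime $0\le\Lambda<1$. Put $\rho(t):=\psi_2(t)-e^{i\phi}\psi_1(t)$; by Theorem \ref{prop:two}(1), $\|\rho(t)\|_{L^2}\lesssim e^{-\sqrt{K^2-4\Omega^2}\,t}$, which is integrable on $\R_+$ since $\Lambda<1$ forces $\sqrt{K^2-4\Omega^2}>0$, while $\sin\phi=\Lambda=\tfrac{2\Omega}{K}$. Substituting $\psi_2=e^{i\phi}\psi_1+\rho$ into $F_1$ and using $\|\psi_1\|_{L^2}=1$, so that $\langle\psi_2,\psi_1\rangle=e^{-i\phi}+\langle\rho,\psi_1\rangle$, the terms proportional to $\psi_1$ collapse:
\[
\Omega\psi_1+i\frac{K}{4}\bigl(e^{i\phi}-e^{-i\phi}\bigr)\psi_1=\Bigl(\Omega-\tfrac{K}{2}\sin\phi\Bigr)\psi_1=0,
\]
leaving $F_1=i\tfrac{K}{4}\bigl(\rho-\langle\rho,\psi_1\rangle\psi_1\bigr)$ and hence $\|F_1(s)\|_{L^2}\le\tfrac{K}{2}\|\rho(s)\|_{L^2}$. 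Writing $\psi_1=e^{-i\phi}\psi_2+\sigma$ with $\sigma:=-e^{-i\phi}\rho$ (so $\|\sigma\|_{L^2}=\|\rho\|_{L^2}$), the identical computation yields $F_2=i\tfrac{K}{4}\bigl(\sigma-\langle\sigma,\psi_2\rangle\psi_2\bigr)$ and $\|F_2(s)\|_{L^2}\le\tfrac{K}{2}\|\rho(s)\|_{L^2}$. Both are integrable in time, as desired.

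With these $L^1_t$ bounds the proof closes at once: the improper integrals defining $\tilde\psi_1,\tilde\psi_2$ converge absolutely in $L^2$, and by unitarity of $U$,
\begin{align*}
\|\psi_j(t)-e^{-i(-\frac12\Delta+V)t}\tilde\psi_j\|_{L^2}
&=\|U(-t)\psi_j(t)-\tilde\psi_j\|_{L^2}
=\Bigl\|\int_t^\infty U(-s)F_j(s)\,ds\Bigr\|_{L^2}\\
&\le\int_t^\infty\|F_j(s)\|_{L^2}\,ds\ \longrightarrow\ 0\qquad\text{as }t\to\infty.
\end{align*}
I expect the only mild obstacle to be organizing the cancellation transparently --- after removing the $\psi_1$-proportional (resp.\ $\psi_2$-proportional) part of $F_1$ (resp.\ $F_2$), one must check that every surviving term carries a factor of $\rho$, equivalently of $z(t)-e^{i\phi}$ with $z=\langle\psi_1,\psi_2\rangle$, and then invoke the exponential decay established in Section \ref{sect:two}; beyond Theorem \ref{prop:two}, mass conservation, and the unitarity of the Schr\"odinger group, no further input is needed.
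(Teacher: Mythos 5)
Your proof is correct and follows essentially the same route as the paper: rewrite \eqref{eq:SLtwo} in Duhamel form, show the source terms $F_j$ are integrable in time using the exponential decay from Theorem \ref{prop:two}, and conclude by unitarity of $e^{-i(-\frac12\Delta+V)t}$. The only difference is that you explicitly carry out the cancellation $\Omega-\frac{K}{2}\sin\phi=0$ showing that $F_1, F_2$ reduce to terms proportional to $\rho=\psi_2-e^{i\phi}\psi_1$, a step the paper dismisses as ``straightforward to see''; this is a welcome clarification rather than a departure.
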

\begin{proof}
Under our assumptions, we know that
\begin{equation*}
\|e^{i\phi}\psi_1(t)-\psi_2(t)\|_{L^2}\lesssim e^{-\sqrt{K^2-4\Omega^2}t},
\end{equation*}
hence it is straightforward to see that
\begin{equation*}
\|\Omega\psi_1+i\frac{K}{4}(\psi_2-\langle\psi_2, \psi_1\rangle\psi_1)\|_{L^2}\lesssim e^{-\sqrt{K^2-4\Omega^2}t}.
\end{equation*}
Let us consider the integral equation for $\psi_1$, from \eqref{eq:SLtwo} we may write
\begin{equation*}
\psi_1(t)=e^{-i(-\frac12\Delta+V)t}\psi_{1, 0}-i\int_0^te^{-i(-\frac12\Delta+V)(t-s)}\left(\Omega\psi_1+i\frac{K}{4}(\psi_2-\langle\psi_2, \psi_1\rangle\psi_1)\right)(s)\,ds,
\end{equation*}
and consequently, 
\begin{equation*}
e^{i(-\frac12\Delta+V)t}\psi_1(t)=\psi_{1, 0}-i\int_0^te^{i(-\frac12\Delta+V)s}\left(\Omega\psi_1+i\frac{K}{4}(\psi_2-\langle\psi_2, \psi_1\rangle\psi_1)\right)(s)\,ds.
\end{equation*}
It is straightforward to infer that the time integral on the right hand side has a strong limit in $L^2$ as $t\to\infty$, indeed
\begin{equation*}
\left\|\int_0^te^{i(-\frac12\Delta+V)s}\left(\Omega\psi_1+i\frac{K}{4}(\psi_2-\langle\psi_2, \psi_1\rangle\psi_1)\right)(s)\,ds\right\|_{L^2}\lesssim\int_0^te^{-\sqrt{K^2-4\Omega^2}s}\,ds.
\end{equation*}
consequently, we may define
\begin{equation*}
\tilde\psi_1=\psi_{1, 0}-i\int_0^\infty e^{i(-\frac12\Delta+V)s}\left(\Omega\psi_1+i\frac{K}{4}(\psi_2-\langle\psi_2, \psi_1\rangle\psi_1)\right)(s)\,ds
\end{equation*}
and the result in Proposition \ref{prop:two} now implies that
\begin{equation*}
\|e^{i(-\frac12\Delta+V)t}\psi_1(t)-\tilde\psi_1\|_{L^2}\lesssim e^{-\sqrt{K^2-4\Omega^2}t}.
\end{equation*}
By the unitarity of $e^{-i(-\frac12\Delta+V)t}$ we then obtain the desired result. An analogous calculation shows the asymptotic behavior for $\psi_2(t)$.
\end{proof}
We turn now our attention to the $H^1$ setting. From Proposition \ref{prop:lwp} we know that the global solution emanated from $(\psi_{1, 0}, \psi_{2, 0})\in H^1(\R^d)$ lives also in $H^1$, namely 
$(\psi_1, \psi_2)\in\mathcal C(\R_+; H^1(\R^d))$. Our aim now is to show frequency synchronization in the $H^1$ setup. First of all we show that the total energy of the system
\begin{equation}\label{eq:en2}
\begin{aligned}
E(t)=&\int\frac12|\nabla\psi_1|^2+\frac12|\nabla\psi_2|^2+V(|\psi_1|^2+|\psi_2|^2)+\Omega(|\psi_1|^2-|\psi_2|^2)\,dx\\
=&\int\frac12|\nabla\psi_1|^2+\frac12|\nabla\psi_2|^2+V(|\psi_1|^2+|\psi_2|^2)\,dx.
\end{aligned}
\end{equation}
is uniformly bounded for all times. Notice that the last equality follows from $\|\psi_1(t)\|_{L^2}=\|\psi_2(t)\|_{L^2}=1$ and condition \eqref{eq:renorm}.
\begin{lemma}
There exists $C \geq 1$, such that for all $t \geq 0$, it follows  
\begin{equation*}
E(t)\leq CE(0).
\end{equation*}
\end{lemma}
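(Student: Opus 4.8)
The plan is to reduce the claim to the stability of a finite-dimensional linear ODE. Write $H:=-\tfrac12\Delta+V$; after adding a constant to $V$ (which only multiplies every $\psi_j$ by a common time-dependent phase and hence leaves all correlations unchanged) we may assume $H\ge 0$, so $E(t)\ge0$ and the Cauchy--Schwarz inequality for the quadratic form of $H$ is available. Differentiating \eqref{eq:en2} along \eqref{eq:SLtwo}, the contribution of $H$ drops out because $H$ is formally self-adjoint ($\RE\langle H\psi_j,-iH\psi_j\rangle=0$) and the $\pm\Omega$ terms drop out because $\langle H\psi_j,\psi_j\rangle\in\R$; only the Lohe coupling survives. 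Setting $z:=\langle\psi_1,\psi_2\rangle$ and $a:=\langle H\psi_1,\psi_2\rangle$, a short computation -- in which the quartic terms $\langle H\psi_1,H\psi_2\rangle$ cancel when $a$ is differentiated -- gives the closed pair
\begin{equation*}
\frac{d}{dt}E=K\,\RE a-\tfrac K2\,(\RE z)\,E,\qquad
\frac{d}{dt}a=2i\Omega\,a+\tfrac K4\,E-\tfrac K2\,z\,a .
\end{equation*}
Coupled with the scalar ODE for $z$ from the proof of Theorem~\ref{prop:two}, the real vector $X:=(E,\RE a,\IM a)$ then solves a non-autonomous linear system $\dot X=M(t)X$ whose matrix is affine in $(\RE z,\IM z)$.

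I would then pass to the limit in the coefficients. For $0\le\Lambda<1$, and assuming the data avoid the unstable equilibrium $\langle\psi_{1,0},\psi_{2,0}\rangle=-\sqrt{1-\Lambda^2}+i\Lambda$ (precisely the hypothesis of Theorem~\ref{prop:two}(1)), Theorem~\ref{prop:two} gives $z(t)\to e^{i\phi}$ with $|z(t)-e^{i\phi}|\lesssim e^{-\sqrt{K^2-4\Omega^2}\,t}$, so $M(t)=M_\infty+R(t)$ with $\|R(t)\|\lesssim e^{-\sqrt{K^2-4\Omega^2}\,t}$, $M_\infty$ being obtained by freezing $z=e^{i\phi}$. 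A direct eigenvalue computation (using $2\Omega=K\Lambda$) shows $\operatorname{spec}(M_\infty)=\{0,\,-\tfrac K2\sqrt{1-\Lambda^2},\,-K\sqrt{1-\Lambda^2}\}$: three distinct real numbers, hence $M_\infty$ is diagonalizable with spectrum in $\{\RE\le 0\}$. Taking a symmetric positive definite $P$ with $M_\infty^{\mathsf T}P+PM_\infty\le 0$ and $Q(X):=X^{\mathsf T}PX$, one gets $\frac{d}{dt}Q(X(t))\le C\,\|R(t)\|\,Q(X(t))\lesssim e^{-\sqrt{K^2-4\Omega^2}\,t}\,Q(X(t))$; since the coefficient is integrable, Gronwall gives $Q(X(t))\le C\,Q(X(0))$, i.e. $\|X(t)\|\le C\|X(0)\|$ for all $t\ge0$. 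In particular $E(t)\le\|X(t)\|\le C\|X(0)\|$, and since the form Cauchy--Schwarz inequality gives $|a(0)|\le\tfrac12 E(0)$ (here $H\ge0$ is used), we obtain $\|X(0)\|\lesssim E(0)$ and therefore $E(t)\le CE(0)$.

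The main obstacle is the reduction step, not the ODE analysis: a Gronwall estimate applied directly to $\tfrac{d}{dt}E=K\RE a-\tfrac K2(\RE z)E$ would only give the worthless bound $E(t)\lesssim E(0)\,e^{Kt/2}$, because the stabilizing effect of synchronization is not visible from $E$ alone. One must (i) identify $a=\langle H\psi_1,\psi_2\rangle$ as the companion quantity that closes the system modulo the already-understood dynamics of $z$, and (ii) check that the frozen matrix $M_\infty$ is neutrally stable. The latter is what forces the genericity hypothesis of Theorem~\ref{prop:two}(1): at the excluded equilibrium $\RE z\equiv-\sqrt{1-\Lambda^2}<0$, the same computation produces two positive eigenvalues and $E$ genuinely grows.
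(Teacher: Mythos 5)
Your argument is correct, but it takes a genuinely different route from the paper's, and in one respect a stronger one. The paper never treats $a=\langle H\psi_1,\psi_2\rangle$ as an unknown: it uses the algebraic identity $K\RE a=\tfrac K2E-\tfrac K2\tilde E$, where $\tilde E$ is the relative energy of $\psi_1-\psi_2$, discards the nonpositive term $-\tfrac K2\tilde E$ (this is where the normalization $H\ge0$ enters, exactly as in your reduction), and is left with the scalar inequality $\tfrac{d}{dt}E\le\tfrac K2\bigl(1-r_{12}(t)\bigr)E$, concluding by Gronwall from the claim $\int_0^\infty(1-r_{12})\,dt<\infty$. That one-line Gronwall is much cheaper than your $3\times3$ analysis, but its coefficient $1-r_{12}$ is integrable only when $r_{12}\to1$, i.e.\ when $\Lambda=0$; for $0<\Lambda<1$ Theorem \ref{prop:two} gives $r_{12}\to\sqrt{1-\Lambda^2}<1$ and the resulting exponent grows linearly in $t$. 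Your closed system for $(E,\RE a,\IM a)$ --- I checked that the $\langle H\psi_1,H\psi_2\rangle$ terms do cancel in $\dot a$ and that the frozen matrix has spectrum $\{0,-\tfrac K2\sqrt{1-\Lambda^2},-K\sqrt{1-\Lambda^2}\}$ --- retains the cancellation between the source $K\RE a$ and the damping $-\tfrac K2(\RE z)E$, and therefore actually establishes the uniform bound on the whole range $0\le\Lambda<1$ of Theorem \ref{prop:two}(1). Two small caveats. First, shifting $V$ by a constant changes $E$ by an additive constant, so what you literally obtain is $\sup_{t\ge0}E(t)<\infty$ rather than the homogeneous inequality $E(t)\le CE(0)$; this is harmless, since uniform boundedness is all that is used in Proposition \ref{prop:H1_sync}, and the paper's own proof needs the same normalization to guarantee $\tilde E\ge0$. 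Second, like the paper's argument, yours does not cover the critical case $\Lambda=1$, where $z\to i$ only at rate $t^{-1}$ and the frozen matrix degenerates to a triple zero eigenvalue; you state this restriction explicitly, which is consistent with the (implicit) scope of the lemma as it is used.
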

\begin{proof}
Let us differentiate the energy in \eqref{eq:en2} with respect to time, by using system \eqref{eq:SLtwo} we have
\begin{equation*}
\begin{aligned}
\frac{d}{dt}E(t)=&\frac{K}{2}\int\RE\bigg\{\left(-\frac12\Delta\bar\psi_1+(V+\Omega)\bar\psi_1\right)(\psi_2-\langle\psi_2, \psi_1\rangle\psi_1)\\
&\quad+\left(-\frac12\Delta\bar\psi_2+(V-\Omega)\bar\psi_2\right)(\psi_1-\langle\psi_1, \psi_2\rangle\psi_2)\bigg\}\,dx\\
=&-\frac{K}{2}r_{12}(t)E(t)+K\int\RE\left\{\frac12\nabla\bar\psi_1\cdot\nabla\psi_2+V\bar\psi_1\psi_2\right\}\,dx,
\end{aligned}
\end{equation*}
where we recall $r_{12}=\RE\langle\psi_1, \psi_2\rangle$.
We write the last term on the right hand side as
\begin{equation*}
\int\RE\left\{\frac12\nabla\bar\psi_1\cdot\nabla\psi_2+V\bar\psi_1\psi_2\right\}\,dx=-\frac12\tilde E(t)+\frac12E(t),
\end{equation*}
where we defined the relative energy
\begin{equation*}
\tilde E(t)=\int\frac12|\nabla(\psi_1-\psi_2)|^2+V|\psi_1-\psi_2|^2\,dx.
\end{equation*}
By using the formula above we get
\begin{equation*}
\frac{d}{dt}E(t)=\frac{K}{2}(1-r_{12}(t))E(t)-\frac{K}{2}\tilde E(t)\leq\frac{K}{2}(1-r_{12}(t))E(t).
\end{equation*}
The lemma then follows from Gronwall's inequality and $\int_0^t(1-r_{12}(t))\,dt<\infty$.
\end{proof}
Next result shows frequency synchronization even in the $H^1$ framework.
\begin{proposition}\label{prop:H1_sync}
Let $(\psi_{1, 0}, \psi_{2, 0})\in H^1(\R^d)$ be such that either point (1) or (2) in Theorem \ref{prop:two} holds true, then it follows 
\begin{equation*}
\|e^{i\phi}\psi_1(t)-\psi_2(t)\|_{H^1}\lesssim e^{-\sqrt{K^2-4\Omega^2}t}, \quad\textrm{as}\;t\to\infty.
\end{equation*}
\end{proposition}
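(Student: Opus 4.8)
The plan is to reduce the problem to a single linear Schr\"odinger equation for the unknown $w:=e^{i\phi}\psi_1-\psi_2$, whose $L^2$-decay is already known from Theorem~\ref{prop:two}, and then to propagate that decay to $H^1$. First I would derive the equation satisfied by $w$, by multiplying the first equation in \eqref{eq:SLtwo} by the constant $e^{i\phi}$, subtracting the second, and substituting $\psi_2=e^{i\phi}\psi_1-w$ and $z(t):=\langle\psi_1,\psi_2\rangle(t)$ everywhere on the right-hand side. The outcome takes the form
\begin{equation*}
i\d_t w=-\tfrac12\Delta w+Vw-a(t)w+b(t)\psi_1,
\end{equation*}
with \emph{scalar} coefficients
\begin{equation*}
a(t)=\Omega+i\tfrac{K}{4}\big(e^{i\phi}+z(t)\big),\qquad b(t)=\tfrac{K}{2}\big(\Lambda-\IM z(t)\big)e^{i\phi}.
\end{equation*}
The step I expect to be the most delicate — indeed the only nontrivial point — is to verify that in this reduction \emph{all} the $O(1)$ contributions (those coming from $\Omega(e^{i\phi}\psi_1+\psi_2)$ and from the pieces of the nonlinearity proportional to $\psi_1$) cancel exactly. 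This uses the definition $\phi=\arcsin\Lambda$, i.e. $e^{i\phi}=\sqrt{1-\Lambda^2}+i\Lambda$, through the identity $e^{2i\phi}=1+2i\Lambda e^{i\phi}$ together with $\tfrac{K}{2}\Lambda=\Omega$; equivalently it reflects the fact that $e^{i\phi}$ is the stable equilibrium of the ODE governing $z(t)$.

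Granting this reduction, I would record the two facts that make the rest routine. First, $\IM a(t)=\tfrac{K}{4}\big(\cos\phi+\RE z(t)\big)\to\tfrac{K}{2}\sqrt{1-\Lambda^2}=\tfrac12\sqrt{K^2-4\Omega^2}>0$, so that for $t$ large the term $-a(t)w$ is genuinely dissipative. Second, $|b(t)|=\tfrac{K}{2}\,|\IM(e^{i\phi}-z(t))|\le\tfrac{K}{2}\,|e^{i\phi}-z(t)|\lesssim e^{-\sqrt{K^2-4\Omega^2}t}$ by the bound on $z(t)$ in Theorem~\ref{prop:two}, so that $b(t)\psi_1$ is an exponentially small forcing once we recall that $\sup_{t\ge0}\|\psi_1(t)\|_{H^1}<\infty$ — which follows from $E(t)\le CE(0)$ (the Lemma above), the mass normalization $\|\psi_1\|_{L^2}=1$, and the fact that $V\in L^p+L^\infty$ with $p>\max\{1,d/2\}$ is infinitesimally form-bounded with respect to $-\Delta$.

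To conclude I would run an energy estimate for the relative energy $\mathcal E_w(t):=\int\tfrac12|\nabla w|^2+V|w|^2\,dx$, the phase-shifted analogue of the quantity $\tilde E$ appearing in the Lemma. Differentiating in time and using the equation for $w$ together with the self-adjointness of $-\tfrac12\Delta+V$ yields
\begin{equation*}
\frac{d}{dt}\mathcal E_w=-2\,\IM\big(a(t)\big)\,\mathcal E_w+O\big(|b(t)|\,\|\psi_1\|_{H^1}\|w\|_{H^1}\big).
\end{equation*}
Since $V$ is form-bounded one has $\|w\|_{H^1}^2\lesssim|\mathcal E_w|+\|w\|_{L^2}^2$ and also the a priori lower bound $\mathcal E_w(t)\ge-C\|w(t)\|_{L^2}^2$; feeding these and the exponential $L^2$-decay of $w$ from Theorem~\ref{prop:two} into the differential inequality, and using $-2\IM(a(t))\le-\sqrt{K^2-4\Omega^2}+o(1)<0$ for $t$ large, a standard Gronwall/continuity argument gives $|\mathcal E_w(t)|\lesssim e^{-\sqrt{K^2-4\Omega^2}t}$, hence $\|\nabla w(t)\|_{L^2}^2\lesssim e^{-\sqrt{K^2-4\Omega^2}t}$, which together with the $L^2$ bound gives the assertion. (In case (2) one has $\Lambda=1$ and $K^2-4\Omega^2=0$, so the claim reduces to the uniform $H^1$-boundedness of $e^{i\phi}\psi_1-\psi_2$, immediate from the Lemma; the content is entirely in case (1).) An alternative to the energy estimate: the scalar $a(t)$ can be removed by the integrating factor $e^{i\int_0^t a}$, whose modulus $e^{-\int_0^t\IM a}$ decays exponentially, turning the $w$-equation into an inhomogeneous linear Schr\"odinger equation with exponentially small source, after which one concludes from the Duhamel formula and the uniform boundedness of $e^{-it(-\frac12\Delta+V)}$ on $H^1$.
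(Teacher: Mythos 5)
Your proposal is correct and follows essentially the same route as the paper: you form the same difference $w=e^{i\phi}\psi_1-\psi_2$, derive its equation (your scalar coefficients $a(t)$, $b(t)$ are exactly the paper's grouping $\frac{K}{4}\RE(e^{-i\phi}+\langle\psi_1,\psi_2\rangle)(\psi_2-e^{i\phi}\psi_1)+\frac{K}{4}(\Lambda-s_{12})(\psi_2+e^{i\phi}\psi_1)$ rewritten, with $2\IM a=\frac{K}{2}(\sqrt{1-\Lambda^2}+r_{12})$ the same dissipative rate), and close with the same energy functional, the uniform bound $E(t)\le CE(0)$, and Gronwall. The only cosmetic differences are that you isolate the forcing as $b(t)\psi_1$ rather than as a multiple of $\psi_2+e^{i\phi}\psi_1$, and you share with the paper the harmless convention of stating the rate for $\|\cdot\|_{H^1}$ while actually bounding its square.
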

\begin{proof}
Having in mind the synchronization result stated in Proposition \ref{prop:two} we want to show that $\psi_d=e^{i\phi}\psi_1-\psi_2$ converges exponentially to zero for large times. By using system \eqref{eq:SLtwo} we deduce that the equation satisfied by $\psi_d$ is given by 
\begin{equation*}
\begin{aligned}
i\d_t\psi_d=-\frac12\Delta\psi_d+V\psi_d+i\frac{K}{4}(\RE(e^{-i\phi}+\langle\psi_1, \psi_2\rangle)(\psi_2-e^{i\phi}\psi_1) \\
+i\IM(e^{i\phi}+\langle\psi_1, \psi_2\rangle)(\psi_2+e^{i\phi}\psi_1).
\end{aligned}
\end{equation*}
To study the asymptotic behavior of $\psi_d$, we consider the following energy functional associated to the previous equation,
\begin{equation*}
E_d(t)=\int\frac12|\nabla\psi_d|^2+V|\psi_d|^2\,dx.
\end{equation*}
By differentiating with respect to time, we get
\begin{equation*}
\begin{aligned}
\frac{d}{dt}E_d(t)=&\frac{K}{2}\int\RE\bigg\{\left(-\frac12\Delta\bar\psi_d+V\bar\psi_d\right)\Big(\RE(e^{-i\phi}+\langle\psi_1, \psi_2\rangle)(\psi_2-e^{i\phi}\psi_1)\\
&\qquad+i\IM(e^{i\phi}+\langle\psi_1, \psi_2\rangle)(\psi_2+e^{i\phi}\psi_1\Big)\bigg\}\,dx\\
=&-\frac{K}{2}\left(\sqrt{1-\Lambda^2}+r_{12}(t)\right)E_d(t)\\
&-\frac{K}{2}\left(\Lambda-s_{12}(t)\right)\int\IM\left\{\left(-\frac12\Delta\bar\psi_d+V\bar\psi_d\right)(\psi_2+e^{i\phi}\psi_1\right\}\,dx.
\end{aligned}
\end{equation*}
the last integral on the right hand side equals
\begin{equation*}
2\int\IM\left\{e^{-i\phi}\left(\frac12\nabla\bar\psi_1\cdot\nabla\psi_2+V\bar\psi_1\psi_2\right)\right\}\,dx,
\end{equation*}
whose absolute value is bounded by the total energy $E(t)\leq CE(0)$. On the other hand, let us also recall that, under our assumptions, we have
\begin{equation*}
\left|\sqrt{1-\Lambda^2}-r_{12}(t)\right|+\left|\Lambda-s_{12}(t)\right|\lesssim e^{-\sqrt{K^2-4\Omega^2}t},\quad\textrm{as}\;t\to\infty,
\end{equation*}
hence  by the Gronwall's inequality, we obtain
\begin{equation*}
\begin{aligned}
E_d(t)\leq& e^{-\sqrt{K^2-4\Omega^2}t}E_d(0)+C\int_0^te^{-\sqrt{K^2-4\Omega^2}(t-s)}\left|\Lambda-s_{12}(s)\right|E(s)\,ds\\
\leq&e^{-\sqrt{K^2-4\Omega^2}t}\left(E_d(0)+CtE(0)\right).
\end{aligned}
\end{equation*}
\end{proof}

\section{Complete synchronization for $N$ oscillators}
In this Section we consider the general case with $N$ identical oscillators, then we can assume $\Omega_j\equiv0$ for any $j=1, \dotsc, N$.
Our first result is dedicated to characterize the asymptotic states of the Schr\"odinger-Lohe model.

\begin{proposition}
We have
\begin{equation}\label{eq:decoupl}
\zeta-\langle\zeta, \psi_j\rangle\psi_j=0,\qquad\forall\;j=1, \dotsc, N,
\end{equation}
if and only if one of the two cases hold:
\begin{itemize}
\item $\zeta=0$ (incoherent dynamics);
\item upon relabelling the wave functions, we have $\psi_1=\dotsc=\psi_k=-\psi_{k+1}=\dotsc=-\psi_N$, for some $k>\frac{N}{2}$ and consequently
\begin{equation*}
\zeta=\left(\frac{2k}{N}-1\right)\psi_1.
\end{equation*}
\end{itemize}
\end{proposition}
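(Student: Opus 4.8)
The plan is to prove the two implications separately. The forward direction (each of the two cases implies \eqref{eq:decoupl}) is a direct verification, while the converse is where the structure of the $\psi_j$'s must be extracted. I expect the only genuine subtlety to be a self-consistency computation for $\langle\zeta,\psi_j\rangle$; everything else is norm bookkeeping and relabelling.

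\emph{The easy direction.} If $\zeta=0$ then \eqref{eq:decoupl} holds trivially. If, after relabelling, $\psi_1=\dots=\psi_k=-\psi_{k+1}=\dots=-\psi_N$ with $k>\frac N2$, then by the definition of the order parameter
\[
\zeta=\frac1N\bigl(k-(N-k)\bigr)\psi_1=\Bigl(\frac{2k}{N}-1\Bigr)\psi_1 ,
\]
and since $\|\psi_1\|_{L^2}=1$ one computes $\langle\zeta,\psi_j\rangle=\frac{2k}{N}-1$ for $j\le k$ and $\langle\zeta,\psi_j\rangle=-\bigl(\frac{2k}{N}-1\bigr)$ for $j>k$; in either case $\langle\zeta,\psi_j\rangle\psi_j=\bigl(\frac{2k}{N}-1\bigr)\psi_1=\zeta$, so \eqref{eq:decoupl} is satisfied (and $\zeta\neq0$ precisely because $k>\frac N2$).

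\emph{The converse.} Assume \eqref{eq:decoupl} and $\zeta\neq0$; I want to show the second alternative holds. Put $c_j:=\langle\zeta,\psi_j\rangle$, so that \eqref{eq:decoupl} reads $\zeta=c_j\psi_j$ for every $j$. Taking $L^2$ norms and using $\|\psi_j\|_{L^2}=1$ gives $|c_j|=\|\zeta\|_{L^2}=:\rho>0$; in particular $c_j\neq0$, hence $\psi_j=c_j^{-1}\zeta$ and every $\psi_j$ is a scalar multiple of $\zeta$. Writing $u:=\rho^{-1}\zeta$, a unit vector in $L^2$, and $c_j=\rho e^{i\gamma_j}$, this becomes $\psi_j=e^{-i\gamma_j}u$. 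Now recompute $c_j$ straight from its definition:
\[
c_j=\langle\zeta,\psi_j\rangle=\langle \rho u,\,e^{-i\gamma_j}u\rangle=\rho e^{-i\gamma_j}\|u\|_{L^2}^2=\rho e^{-i\gamma_j}.
\]
Comparing with $c_j=\rho e^{i\gamma_j}$ forces $e^{2i\gamma_j}=1$, hence $e^{-i\gamma_j}=\pm1$, i.e. $\psi_j=\pm u$ for each $j$. This self-consistency step is the crux: the relation $\zeta=c_j\psi_j$ only controls $|c_j|$, but re-expanding the inner product $\langle\zeta,\psi_j\rangle$ pins down its phase and collapses the phase ambiguity to a sign.

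\emph{Organising the signs.} Let $k$ be the number of indices with $\psi_j=u$; relabelling, $\psi_1=\dots=\psi_k=u$ and $\psi_{k+1}=\dots=\psi_N=-u$. Then $\zeta=\frac{2k-N}{N}u$, so $\rho=\frac{|2k-N|}{N}$, and $\rho>0$ rules out $2k=N$. If $2k>N$ we are done with this labelling, with $\zeta=\bigl(\frac{2k}{N}-1\bigr)\psi_1$; if $2k<N$, replace $u$ by $-u$ (equivalently, swap the two groups), which reduces to the previous case with $k$ replaced by $N-k>\frac N2$. This exhausts all possibilities and proves the proposition. The main obstacle, as noted, is the phase-rigidity argument in the converse; the rest is routine.
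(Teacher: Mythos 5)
Your proof is correct and follows essentially the same route as the paper's: reduce to $\zeta=c_j\psi_j$ with $c_j=\langle\zeta,\psi_j\rangle$, force $c_j$ to be real by re-expanding the inner product (your $e^{2i\gamma_j}=1$ step is exactly the paper's $(c_j-\bar c_j)\psi_j=0$), and then count signs to get $c=\frac{2k}{N}-1$ with $k>\frac N2$. The only cosmetic difference is that you also spell out the easy direction and note (redundantly, since $\zeta=\rho u$ with $\rho>0$ already forces $2k>N$) the relabelling when $2k<N$.
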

\begin{proof}
Recall the definition $\zeta=\frac1N\sum\psi_j$, clearly if $\zeta=0$ then \eqref{eq:decoupl} is satisfied for all $j=1, \dotsc, N$. Thus let us assume in the sequel that $\zeta\neq0$. For \eqref{eq:decoupl} to hold true, then we have $\zeta=c_j\psi_j$, where $c_j=\langle\zeta, \psi_j\rangle\in\mathbb C$. On the other hand, since $\|\psi_j\|_{L^2}=1$, then
\begin{equation*}
\zeta-\langle\zeta, \psi_j\rangle\psi_j=(c_j-\bar c_j)\psi_j=0,
\end{equation*}
and consequently $c_j\in\mathbb R$. Furthermore this also implies that the absolute value of the constant $c_j$ must be independent on $j$, as $\|\zeta\|_{L^2}=|c_j|=c$, for any $j=1, \dotsc, N$, with $c\in(0, 1]$. On the other hand we have
\begin{equation*}
\zeta=\frac1N\sum\psi_j=\frac1N\sum\frac{1}{c_j}\zeta
\end{equation*}
and consequently
\begin{equation*}
1=\frac1N\sum\frac{1}{c_j}=\frac{1}{Nc}\sum\sgn c_j,
\end{equation*}
where $\sgn c_j$ is the sign of the constant $c_j$. Let us assume that there are $k$ positive $c_j$; upon relabelling the wave functions we may assume that $c_1=\dotsc=c_k=-c_{k+1}=\dotsc=c_N\geq0$. The formula above then implies that $2k-N=Nc$ and hence $c=\frac{2k}{N}-1$. 
\end{proof}
The first case described in the Proposition above corresponds to an incoherent state, where the wave functions are "symmetrically" positioned with respect to the zero wave function in $L^2(\R^d)$. In this case the wave functions evolve independently according to $\psi_j(t)=e^{-itH}\psi_{j, 0}$, where the Hamiltonian is given by $H=-\frac12\Delta+V$.
The second case corresponds to synchronization; namely $k=N$ will be the complete phase synchronization, while $1\leq k\leq N-1$ will provide frequency synchronization only.

In this context, we are going to investigate  under which assumptions the system \eqref{eq:SL2} exhibits complete phase synchronization. As before we will denote by  $\tilde r_j=\RE\langle\zeta, \psi_j\rangle$, $\tilde s_j=\IM\langle\zeta, \psi_j\rangle$ the macroscopic correlations of the wave functions with the order parameter.
\begin{proposition}\label{prop:phase_synch}
Let $(\psi_1, \dotsc, \psi_N)$ be solutions to \eqref{eq:SL2}, with $(\psi_1(0), \dotsc, \psi_N(0))=(\psi_{1, 0}, \dotsc, \psi_{N, 0})$ and with $\Omega_j=0$ for any $j=1,\dotsc, N$. Let us furthermore assume 
$\tilde r_j(0)>0$ for any $j=1, \dotsc, N$, where $\tilde r_j$ is defined in \eqref{eq:macro_corr}. Then we have
\begin{equation}\label{eq:bdr}
|1-\tilde r_j(t)|^2+|\tilde s_j(t)|^2\lesssim e^{-Kt},\quad\textrm{as}\;t\to\infty.
\end{equation}
\end{proposition}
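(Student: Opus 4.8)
The plan is to reduce the vectorial estimate \eqref{eq:bdr} to a scalar decay estimate for $m(t):=\|\zeta(t)\|_{L^2}^2$ and then to analyse the latter. Write $\delta_\ell(t):=1-\tilde r_\ell(t)$ and $\bar\delta(t):=1-m(t)$. Since $\|\psi_\ell\|_{L^2}=1$, Cauchy--Schwarz gives $\tilde r_\ell\le|\langle\zeta,\psi_\ell\rangle|\le\|\zeta\|_{L^2}\le1$, so $\delta_\ell\in[0,1]$, and also $|\tilde s_\ell|^2\le\|\zeta\|_{L^2}^2-\tilde r_\ell^2\le1-\tilde r_\ell^2\le2\delta_\ell$; hence the left-hand side of \eqref{eq:bdr} is bounded by $3\delta_j$. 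By the identity recorded just after \eqref{eq:macro_corr}, $\bar\delta=\frac1N\sum_\ell\delta_\ell$ with all $\delta_\ell\ge0$, so $\delta_j\le N\bar\delta$ for every $j$, and moreover $\delta_j\ge\frac12\bar\delta$ by the AM--GM bound $|\langle\zeta,\psi_j\rangle|\le\frac12(1+\|\zeta\|_{L^2}^2)$. Consequently the left-hand side of \eqref{eq:bdr} is $\le 3N\bar\delta(t)$, and it suffices to prove $\bar\delta(t)\lesssim e^{-Kt}$.

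Everything then rests on two rewritings of the ODE for $m=\|\zeta\|_{L^2}^2$ recorded in the Remark preceding Proposition \ref{prop:macro_corr}, specialised to $\Omega_\ell\equiv0$. Substituting $\tilde r_\ell=1-\delta_\ell$ gives, on the one hand,
\begin{equation*}
\frac{d}{dt}\bar\delta=-K\bar\delta+\frac KN\sum_\ell\delta_\ell^2-\frac KN\sum_\ell\tilde s_\ell^2\le-K\bar\delta\,(1-N\bar\delta),
\end{equation*}
using $\delta_\ell\le N\bar\delta$; this inequality holds with no extra hypothesis, and it already produces the optimal rate: if $\bar\delta(t_1)<1/N$ at some time $t_1$, then $u:=1/\bar\delta$ satisfies $\dot u\ge Ku-KN$ for $t\ge t_1$, whence $\bar\delta(t)\le(u(t_1)-N)^{-1}e^{-K(t-t_1)}$ for all $t\ge t_1$. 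Regrouping the very same identity gives, on the other hand,
\begin{equation*}
\frac{d}{dt}\bar\delta=-\frac KN\sum_\ell\tilde r_\ell(1-\tilde r_\ell)-\frac KN\sum_\ell\tilde s_\ell^2,
\end{equation*}
which, as soon as $\tilde r_\ell(t)\in[0,1]$ for all $\ell$, shows $\bar\delta$ to be nonincreasing.

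Assume for the moment the positivity $\tilde r_j(t)\ge0$ for all $t\ge0$ and all $j$ (treated in the last paragraph). Then the second identity makes $\bar\delta$ nonincreasing, so $\bar\delta(t)\downarrow L\ge0$, and $L\le\bar\delta(0)<1$ because $m(0)=\frac1N\sum_j\tilde r_j(0)>0$. Integrating that identity over $[0,\infty)$ shows $\int_0^\infty\sum_\ell\tilde r_\ell(1-\tilde r_\ell)\,dt<\infty$, so there is $t_n\to\infty$ along which, after passing to a subsequence, $\tilde r_\ell(t_n)\to\mu_\ell\in\{0,1\}$ for each $\ell$. By monotonicity $m(t)\to1-L$, hence $\frac1N\sum_\ell\mu_\ell=1-L>0$ and some $\mu_{\ell_0}=1$; but $\tilde r_{\ell_0}(t_n)\le\|\zeta(t_n)\|_{L^2}=\sqrt{m(t_n)}\to\sqrt{1-L}$ forces $1-L\ge1$, i.e.\ $L=0$. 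Thus $\bar\delta(t)\to0$, a time $t_1$ with $\bar\delta(t_1)<1/N$ exists, and the first differential inequality of the previous paragraph closes the proof.

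The one point that requires real work — and that I expect to be the \emph{main obstacle} — is the positivity preservation: $\tilde r_j(0)>0$ for all $j$ should imply $\tilde r_j(t)>0$ for all $t\ge0$ and all $j$. The natural line of attack is to study $\rho(t):=\min_j\tilde r_j(t)>0$ and to rule out a first zero of $\rho$ at some $t^*$, attained by an index $j_0$: by \eqref{eq:ODE_corr} with $\Omega_{j_0}=0$ and $\tilde r_{j_0}(t^*)=0$,
\begin{equation*}
\frac{d}{dt}\tilde r_{j_0}(t^*)=\frac K2\Bigl[\tilde s_{j_0}^2+\frac1N\sum_{\ell\ne j_0}\bigl(\tilde r_\ell(1-r_{\ell j_0})-\tilde s_\ell s_{\ell j_0}\bigr)\Bigr],
\end{equation*}
the term $\ell=j_0$ dropping out since $r_{j_0j_0}=1$, $s_{j_0j_0}=0$. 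Each $\tilde r_\ell(1-r_{\ell j_0})\ge0$; and for $\ell\ne j_0$ one has $\tilde r_\ell>0$ and $s_{\ell j_0}^2\le2(1-r_{\ell j_0})$, so Young's inequality absorbs $-\tilde s_\ell s_{\ell j_0}$ into $\frac12\tilde r_\ell(1-r_{\ell j_0})$ plus a multiple of $\tilde s_\ell^2/\tilde r_\ell$. The delicate part is then to reabsorb the leftover $\frac1N\sum_{\ell\ne j_0}\tilde s_\ell^2/\tilde r_\ell$ against $\tilde s_{j_0}^2$ and the surviving part of $\frac1N\sum_\ell\tilde r_\ell(1-r_{\ell j_0})$; here I would invoke $\tilde s_\ell^2\le m-\tilde r_\ell^2$ (so $\tilde s_\ell$ is small exactly where $m$ is small) together with the comparison $\frac12\bar\delta\le\delta_\ell\le N\bar\delta$ established in the first paragraph, and, if a bare sign at $t^*$ turns out to be out of reach, fall back on a one-sided inequality $\dot\rho\ge-c(t)\,\rho$ with $c$ locally bounded, which already forbids $\rho$ from vanishing in finite time. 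Once positivity is secured, the argument above delivers \eqref{eq:bdr} with the sharp rate $e^{-Kt}$.
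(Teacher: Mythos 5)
Your reduction to the scalar quantity $\bar\delta(t)=1-\|\zeta(t)\|_{L^2}^2$ is a genuinely different route from the paper's: the paper never exploits the ODE for $\|\zeta\|_{L^2}^2$ (it even remarks that little information can be extracted from it) and instead runs a Gronwall argument on the vector Lyapunov functional $\frac{1}{2N}\sum_j\bigl[(1-\tilde r_j)^2+\tilde s_j^2\bigr]$, whose derivative it bounds by $-\frac{K}{2N}\sum_j\tilde r_j\bigl[(1-\tilde r_j)^2+\tilde s_j^2\bigr]$ after absorbing the cross terms with Young's inequality. The parts of your argument that you actually carry out are correct: the comparison $\frac12\bar\delta\le\delta_j\le N\bar\delta$, the two rewritings of $\frac{d}{dt}\bar\delta$, the unconditional inequality $\frac{d}{dt}\bar\delta\le-K\bar\delta(1-N\bar\delta)$ with its consequence once $\bar\delta<1/N$, and the soft argument forcing the monotone limit $L$ to vanish.

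The proof is nonetheless incomplete, and the gap is exactly where you locate it: everything funnels through the positivity preservation $\tilde r_j(t)\ge0$ for all $t\ge0$, which you only sketch. The sketch does not close. At a putative first zero $t^*$ of $\min_j\tilde r_j$ the leftover term $\frac1N\sum_{\ell\ne j_0}\tilde s_\ell^2/\tilde r_\ell$ produced by your Young's inequality has no a priori bound (another $\tilde r_\ell$ may be small while $\tilde s_\ell$ is not), and the fallback $\dot\rho\ge-c(t)\rho$ is unavailable because the negative contributions $-\tilde s_\ell s_{\ell j_0}$ do not vanish with $\rho=\min_j\tilde r_j$. Note also that your scheme cannot bootstrap itself: even granting monotonicity of $\bar\delta$, you only obtain $\delta_j(t)\le N\bar\delta(0)$, which keeps $\tilde r_j(t)$ positive only under the extra smallness $\bar\delta(0)<1/N$, not under the stated hypothesis $\tilde r_j(0)>0$. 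The paper's choice of functional is designed so that decay and positivity travel together: its monotonicity controls each individual $(1-\tilde r_j(t))^2+\tilde s_j(t)^2$, which is the quantity one needs for a continuity argument keeping $\tilde r_j(t)>0$ (this step is stated tersely in the paper as well, but the functional is the right vehicle for it, whereas an average of first powers is not). Until positivity is supplied, your claimed rate $e^{-Kt}$ in \eqref{eq:bdr} is not established.
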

\begin{proof}
The statement of this Proposition is equivalent to prove that the quantities $1-\langle\zeta, \psi_j\rangle$ exponentially converge to zero for large times, for any $j=1, \dotsc, N$.
For this reason let us write \eqref{eq:corr} in the following way
\begin{equation*}
\begin{aligned}
\frac{d}{dt}\left(1-\langle\zeta, \psi_j\rangle\right)=&i\Omega_j-i\Omega_j(1-\langle\zeta, \psi_j\rangle)+\frac{i}{N}\sum_{\ell=1}^N\Omega_\ell(1-\langle\psi_\ell, \psi_j\rangle)\\
&-\frac{K}{2}\left[2(1-\langle\zeta, \psi_j\rangle)-(1-\langle\zeta, \psi_j\rangle)^2-\frac1N\sum_{\ell=1}^N(1-\langle\psi_\ell, \zeta\rangle)(1-\langle\psi_\ell, \psi_j\rangle)\right].
\end{aligned}
\end{equation*}
By setting $\Omega_j=0$ for all $j=1, \dotsc, N$, we have
\begin{equation*}
\frac{d}{dt}\left(1-\langle\zeta, \psi_j\rangle\right)=-\frac{K}{2}\left[2(1-\langle\zeta, \psi_j)-(1-\langle\zeta, \psi_j\rangle)^2-\frac1N\sum_{\ell=1}^N(1-\langle\psi_\ell, \zeta\rangle)(1-\langle\psi_\ell, \psi_j\rangle)\right].
\end{equation*}
For simplicity, let us define 
\begin{equation*}
1-\langle\zeta, \psi_j\rangle=:f_j+ig_j,
\end{equation*}
so that $f_j=1-\tilde r_j$, $g_j=-\tilde s_j$, and analogously $1-\langle\psi_\ell, \psi_j\rangle=:f_{\ell j}+ig_{\ell j}$, with $f_{\ell j}, g_{\ell j}$ real-valued functions. By separating the real and the imaginary parts in the previous ODE, we have
\begin{equation}\label{eq:corr_r}
\left\{\begin{aligned}
\frac{d}{dt}f_j=&-\frac{K}{2}\left[2f_j-(f_j^2-g_j^2)-\frac1N\sum_{\ell=1}^N(f_{\ell}f_{\ell j}+g_\ell g_{\ell j})\right]\\
\frac{d}{dt}g_j=&-\frac{K}{2}\left[2g_j-2f_jg_j-\frac1N\sum_{\ell=1}^N(f_\ell g_{\ell j}-g_\ell f_{\ell j})\right].
\end{aligned}\right.
\end{equation}
To investigate the  system \eqref{eq:corr_r}, we consider the quantity 
\begin{equation*}
\frac{1}{2N}\sum_{j=1}^N\left(f_j^2+g_j^2\right),
\end{equation*}
then by differentiating in time and by using that $g_{\ell j}=-g_{j\ell}$, we have
\begin{equation}\label{eq:Lyap}
\frac{d}{dt}\left(\frac{1}{2N}\sum_{j=1}^n(f_j^2+g_j^2)\right)=-\frac{K}{2N}\sum_{j=1}^N(2-f_j)(f_j^2+g_j^2)
+\frac{K}{2N^2}\sum_{j, \ell=1}^Nf_{\ell j}(f_jf_\ell+g_jg_\ell).
\end{equation}
Since  $f_{\ell j}\geq 0$, hence by the Young's inequality, we have
\begin{equation*}
\frac{K}{2N^2}\sum_{j, \ell=1}^Nf_{\ell j}(f_jf_\ell-g_jg_\ell)\leq\frac{K}{2N}\sum_{j=1}^Nf_{j}\left(f_j^2+g_j^2\right),
\end{equation*}
where we used that $\frac1N\sum_\ell f_{j\ell}=\frac1N\sum_\ell f_{\ell j}=f_j$. As a consequence we obtain
\begin{equation*}
\frac{d}{dt}\left(\frac{1}{2N}\sum_{j=1}^N(f_j^2+g_j^2)\right)\leq-\frac{K}{2N}\sum_{j=1}^N(1-f_j)(f_j^2+g_j^2).
\end{equation*}
Moreover since  $1-f_j(0)>0$, for any $j=1, \dotsc, N$, then  $\frac{1}{2N}\sum_{j=1}^N(f_j^2+g_j^2)$ is exponentially decreasing in time. Consequently we also have $1-f_j(t)>1-f_j(0)$. for all positive times and then, for sufficiently large times, we have
\begin{equation*}
\frac{1}{2N}\sum_{j=1}^N\left(f_j^2+g_j^2\right)(t)\leq e^{-Kt}\frac{1}{2N}\sum_{j=1}^N(f_j(0)^2+g_j(0)^2).
\end{equation*}
\end{proof}

The previous Proposition yields to
\begin{equation*}
\lim_{t\to\infty}\langle\zeta, \psi_j\rangle(t)=1,\quad\textrm{for any}\;j=1, \dotsc, N,
\end{equation*}
with an explicit decay rate given by $|1-\langle\zeta, \psi_j(t)\rangle|\lesssim e^{-Kt}$, as $t\to\infty$. 

As a consequence of this fact, we get indeed the complete phase synchronization for the system \eqref{eq:SL2}. Moreover we will show that, once we prove complete synchronization, it will be also possible to further improve the decay rate in \eqref{eq:bdr}. \\
We notice that \eqref{eq:gdr} below provides indeed the same decay rate obtained for the two oscillators case, when $\Omega=0$, see \eqref{eq:gdr_two}.
\begin{proposition}\label{prop:sync_id}
Under the same hypotheses of Proposition \ref{prop:phase_synch} we have
\begin{equation*}
\lim_{t\to\infty}\|\psi_j(t)-\psi_k(t)\|_{L^2}=0,\qquad\forall\;j, k=1, \dotsc, N
\end{equation*}
and moreover
\begin{equation}\label{eq:gdr}
\|\psi_j(t)-\psi_k(t)\|_{L^2}\lesssim e^{-Kt},\quad\textrm{as}\;t\to\infty.
\end{equation}
\end{proposition}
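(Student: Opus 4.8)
\emph{Proof strategy (plan).}
The plan is to reduce the statement to the asymptotics of the macroscopic correlations already established in Proposition~\ref{prop:phase_synch}. The starting point is the elementary identity
\begin{equation*}
\|\psi_j(t)-\psi_k(t)\|_{L^2}^2=2\bigl(1-\RE\langle\psi_j,\psi_k\rangle(t)\bigr)=2\,\RE\bigl(1-z_{jk}(t)\bigr),\qquad z_{jk}:=\langle\psi_j,\psi_k\rangle,
\end{equation*}
which holds because $\|\psi_j\|_{L^2}=\|\psi_k\|_{L^2}=1$; thus everything follows once we control the complex quantity $1-z_{jk}$.

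As a warm-up, convergence to zero is almost immediate: combining the identity in the Remark following Definition~\ref{def:sync}, namely $1-\|\zeta\|_{L^2}^2=\frac{1}{2N^2}\sum_{j,k}\|\psi_j-\psi_k\|_{L^2}^2$, with $\|\zeta\|_{L^2}^2=\frac1N\sum_j\tilde r_j$, one gets $\sum_{j,k}\|\psi_j-\psi_k\|_{L^2}^2=2N\sum_j(1-\tilde r_j)$, and the right-hand side tends to $0$ exponentially by \eqref{eq:bdr}; hence each individual distance does.

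For the sharp rate I would instead write down the ODE satisfied by $z_{jk}$. Differentiating $\bar\psi_j\psi_k$ by means of \eqref{eq:SL2} with $\Omega_\ell\equiv0$ and integrating over $\R^d$ — exactly as in the proof of Proposition~\ref{prop:macro_corr}, the potential terms cancelling and the kinetic terms combining into the divergence of a current density (see also Remark~\ref{rmk:ODE_full}) — one obtains the linear scalar equation
\begin{equation*}
\frac{d}{dt}\bigl(1-z_{jk}\bigr)=-\frac{K}{2}\bigl(\langle\zeta,\psi_k\rangle+\langle\psi_j,\zeta\rangle\bigr)\bigl(1-z_{jk}\bigr),
\end{equation*}
which for $N=2$ reduces to \eqref{eq:ODE_z} with $\Omega=0$. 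Integrating this ODE yields
\begin{equation*}
\bigl|1-z_{jk}(t)\bigr|=\bigl|1-z_{jk}(0)\bigr|\exp\!\left(-\frac{K}{2}\int_0^t\RE\bigl(\langle\zeta,\psi_k\rangle+\langle\psi_j,\zeta\rangle\bigr)\,ds\right).
\end{equation*}
By Proposition~\ref{prop:phase_synch} we have $\RE\bigl(\langle\zeta,\psi_k\rangle+\langle\psi_j,\zeta\rangle\bigr)=2-(1-\tilde r_j)-(1-\tilde r_k)$ with $1-\tilde r_\ell\ge 0$ integrable on $[0,\infty)$ thanks to \eqref{eq:bdr}, so the exponent equals $-Kt+O(1)$ and therefore $|1-z_{jk}(t)|\lesssim e^{-Kt}$. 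Plugging this back into the identity above gives $\|\psi_j(t)-\psi_k(t)\|_{L^2}^2\le 2|1-z_{jk}(t)|\lesssim e^{-Kt}$, which is \eqref{eq:gdr}; in particular the limit is $0$.

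The only genuinely computational step is the derivation of the ODE for $z_{jk}$, and since it is the verbatim analogue — with $\psi_j$ in place of $\zeta$ — of the computation already carried out in Proposition~\ref{prop:macro_corr}, I do not expect it to be an obstacle. The substantive input is entirely front-loaded into Proposition~\ref{prop:phase_synch}, which is exactly what guarantees that the time-dependent coefficient $\langle\zeta,\psi_k\rangle+\langle\psi_j,\zeta\rangle$ has real part bounded away from $0$ (and close to $2$) for all large times; given that, the conclusion is a one-line Gronwall argument. One should also note that if $z_{jk}(0)=1$ for some pair, that pair is already synchronized for all times, consistently with the exponential formula above.
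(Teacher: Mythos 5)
Your proof is correct and follows essentially the same route as the paper: the same ODE $\frac{d}{dt}z_{jk}=\frac{K}{2}\bigl(\langle\psi_j,\zeta\rangle+\langle\zeta,\psi_k\rangle\bigr)(1-z_{jk})$, with Proposition \ref{prop:phase_synch} as the sole input. Your direct integration of this linear scalar ODE, $|1-z_{jk}(t)|=|1-z_{jk}(0)|\exp\bigl(-\frac{K}{2}\int_0^t(\tilde r_j+\tilde r_k)\,ds\bigr)$, combined with the integrability of $1-\tilde r_\ell$, is in fact a cleaner one-step version of the paper's argument, which first works with $|1-z_{jk}|^2$ to get a suboptimal rate and then bootstraps through the Duhamel form of \eqref{eq:Lyap} to reach $|1-z_{jk}|\lesssim e^{-Kt}$. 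One caveat you share with the paper's own proof: the final display gives $\|\psi_j(t)-\psi_k(t)\|_{L^2}^2\le 2|1-z_{jk}(t)|\lesssim e^{-Kt}$, i.e.\ a rate $e^{-Kt/2}$ for the norm itself, so the bound \eqref{eq:gdr} as literally written (on the norm, not its square) is not what either argument actually establishes.
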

\begin{proof}
From system \eqref{eq:SL2}, we get the system of ODEs satisfied by $\langle\psi_j, \psi_k\rangle$, namely
\begin{equation*}
\frac{d}{dt}\langle\psi_j, \psi_k\rangle=\frac{K}{2}\left(\langle\psi_j, \zeta\rangle+\langle\zeta, \psi_k\rangle\right)\left(1-\langle\psi_j, \psi_k\rangle\right),
\end{equation*}
or analogously, by using the same notations as in the Proposition \ref{prop:phase_synch},
\begin{equation*}
\left\{\begin{aligned}
\dot f_{jk}=&-\frac{K}{2}(2-f_j-f_k)f_{jk}+\frac{K}{2}(g_j-g_k)g_{jk}\\
\dot g_{jk}=&-\frac{K}{2}(2-f_j-f_k)g_{jk}-\frac{K}{2}(g_j-g_k)f_{jk}.
\end{aligned}\right.
\end{equation*}
Let us now consider the quantity $\frac12(f_{jk}^2+g_{jk}^2)$, by differentiating it with respect to time we obtain
\begin{equation*}
\frac{d}{dt}\left(\frac12(f_{jk}^2+g_{jk}^2)\right)=-\frac{K}{2}\left(2-f_j-f_k\right)(f_{jk}^2+g_{jk}^2)
\end{equation*}
and by integrating the above ODE, we get
\begin{equation*}
\frac12\left(f_{jk}^2+g_{jk}^2\right)=e^{-K\int_0^t(2-f_j-f_k)(s)\,ds}\left(f_{jk}^2+g_{jk}^2\right)(0).
\end{equation*}
By using the exponential convergence to 1 of $f_j, f_k$ proved in the previous Proposition, we then infer
\begin{equation*}
|1-r_{jk}(t)|^2+|s_{jk}(t)|^2\lesssim e^{-Kt}.
\end{equation*}
Let us notice that in the last equality we don't get the optimal decay rate. However, we can now improve it by going back to the inequality \eqref{eq:Lyap}. By using the results obtained so far, it follows 
\begin{equation*}
\begin{aligned}
\frac{1}{2N^2}\sum_{j=1}^N\left(f_j^2+g_j^2\right)(t)\leq&e^{-2Kt}\frac{1}{2N^2}\sum_{j=1}^N\left(f_j^2(0)+g_j^2(0)\right)\\
&+\int_0^te^{-2K(t-s)}\bigg(\frac{K}{2N}\sum_{j=1}^N(f_j^3(s)+f_j(s)g_j^2(s))\\
&\quad+\frac{K}{2N^2}\sum_{j, \ell=1}^Nf_{\ell j}(s)(f_j(s)f_{\ell}(s)+g_j(s)g_\ell(s))\bigg)\,ds.
\end{aligned}
\end{equation*}
By repeating the same argument as before, one has
\begin{equation*}
|1-r_{jk}(t)|^2+|s_{jk}(t)|^2\lesssim e^{-2Kt}.
\end{equation*}
Since $r_{jk}(t)=\RE\langle\psi_j, \psi_k\rangle(t)$, we get
\begin{equation*}
\|\psi_j(t)-\psi_k(t)\|_{L^2}^2\lesssim 2(1-r_{jk}(t))\lesssim e^{-Kt}.
\end{equation*}
\end{proof}
Let us now show that, under the same hypotheses as Proposition \ref{prop:phase_synch}, we can also prove phase synchronization in $H^1(\R^d)$.
\begin{theorem}\label{thm:H1_N}
Let $(\psi_1, \dotsc, \psi_N)$ be solutions to \eqref{eq:SL2},  with $\psi_1(0), \dotsc, \psi_N(0))=(\psi_{1, 0}, \dotsc, \psi_{N, 0})\in H^1(\R^d)$ and $\Omega_j=0$ for any $j=1, \dotsc, N$. Let us furthermore assume 
$\tilde r_j(0)>0$, for any $j=1, \dotsc, N$. Then we have
\begin{equation*}
\lim_{t\to\infty}\|\psi_j(t)-\psi_k(t)\|_{H^1}=0,\quad\textrm{as}\;j,k=1,\dotsc, N.
\end{equation*}
\end{theorem}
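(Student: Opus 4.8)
The plan is to mimic the $H^1$ argument from Proposition \ref{prop:H1_sync} in the two oscillator case, now applied to each pairwise difference $\psi_{jk}:=\psi_j-\psi_k$. First I would note that, since $\Omega_j\equiv0$ and the oscillators share the same potential $V$, subtracting the equations \eqref{eq:SL2} for $\psi_j$ and $\psi_k$ gives a linear Schr\"odinger equation for $\psi_{jk}$ with a forcing term that is a combination of $\psi_j,\psi_k$ multiplied by the scalar coefficients $\langle\zeta,\psi_j\rangle$, $\langle\zeta,\psi_k\rangle$; explicitly,
\begin{equation*}
i\d_t\psi_{jk}=-\tfrac12\Delta\psi_{jk}+V\psi_{jk}+i\tfrac{K}{2}\bigl(\langle\zeta,\psi_j\rangle\psi_j-\langle\zeta,\psi_k\rangle\psi_k\bigr),
\end{equation*}
which I would rearrange so that the right-hand side is $-\tfrac{K}{2}$ times a quantity that is $O(\|\psi_{jk}\|_{L^2})$ plus terms carrying the factors $1-\langle\zeta,\psi_j\rangle$ and $1-\langle\zeta,\psi_k\rangle$, exactly as in the $N=2$ computation.

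Next I would introduce the relative energies $E_{jk}(t):=\int \tfrac12|\nabla\psi_{jk}|^2+V|\psi_{jk}|^2\,dx$ and differentiate in time using the equation above. As in the proof of Proposition \ref{prop:H1_sync}, the derivative splits into a "good" term of the form $-\tfrac{K}{2}(2-f_j-f_k)E_{jk}(t)$ (with $f_j=1-\tilde r_j\to0$, so the coefficient is eventually close to $K$ and in particular bounded below by, say, $K/2$ for large $t$) plus "error" terms in which the small factors $g_j-g_k=-(\tilde s_j-\tilde s_k)$ and $f_j,f_k$ multiply integrals of the form $\int \IM\{(-\tfrac12\Delta\bar\psi_{jk}+V\bar\psi_{jk})(\psi_j+\psi_k)\}\,dx$. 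These integrals, after integrating the Laplacian by parts, reduce to expressions controlled by the total energy of the system. So before running Gronwall I would first establish the uniform-in-time energy bound: the same computation as in the $N=2$ Lemma, using $\frac{d}{dt}E=\tfrac{K}{2}\sum_j(\ldots)$ and the integrability $\int_0^\infty(1-r_{jk}(t))\,dt<\infty$ (which follows from \eqref{eq:gdr}), gives $E(t)\le CE(0)$ for all $t$, hence each $\|\nabla\psi_j(t)\|_{L^2}$ stays bounded, and therefore each error integral above is $O(1)$.

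Then the conclusion follows from Gronwall's inequality applied to $E_{jk}$: the good term provides exponential damping at rate $\gtrsim K$, while the error terms are bounded by $C\,e^{-Kt}$ (since $f_j,g_j,\tilde s_j-\tilde s_k$ all decay like $e^{-Kt}$ by Propositions \ref{prop:phase_synch} and \ref{prop:sync_id}) times the bounded total energy. One obtains $E_{jk}(t)\lesssim e^{-Kt}(E_{jk}(0)+Ct\,E(0))$, so $\|\nabla\psi_{jk}(t)\|_{L^2}^2$ decays, up to the harmless polynomial factor, like $e^{-Kt}$. Combining this with the $L^2$ decay $\|\psi_{jk}(t)\|_{L^2}\lesssim e^{-Kt/2}$ from \eqref{eq:gdr}, and using that $V\in L^p+L^\infty$ so that $\|\psi_{jk}\|_{H^1}^2$ is controlled by $\|\nabla\psi_{jk}\|_{L^2}^2+\|\psi_{jk}\|_{L^2}^2$ (the potential term in $E_{jk}$ being absorbed by interpolation/Hölder plus Sobolev, at the cost of a lower-order $\|\psi_{jk}\|_{L^2}$ contribution), we get $\|\psi_j(t)-\psi_k(t)\|_{H^1}\to0$ as $t\to\infty$, which is the claim.

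The main obstacle I anticipate is handling the potential term in the relative energy when extracting an honest $H^1$ bound from the boundedness and decay of $E_{jk}$: since $V$ is only in $L^p+L^\infty$ it is not sign-definite and $E_{jk}$ is not comparable to $\|\psi_{jk}\|_{H^1}^2$ on the nose, so one must interpolate $\int V|\psi_{jk}|^2$ using Hölder and the Gagliardo–Nirenberg/Sobolev inequality, absorb a small fraction of $\|\nabla\psi_{jk}\|_{L^2}^2$, and carry along the extra $\|\psi_{jk}\|_{L^2}^2$ terms (which decay anyway). A secondary technical point is making sure the "error" integrals involving $\Delta\bar\psi_{jk}$ are legitimate — this is where one uses the a priori $H^1$ (indeed one may need an $H^2$-type or duality argument, or simply integrate by parts to move one derivative onto $\psi_j+\psi_k$, which is exactly what was done in the $N=2$ case) — but this is routine given the uniform energy bound. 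Everything else is a direct transcription of the two-oscillator arguments with the order-parameter correlations $\tilde r_j,\tilde s_j$ playing the role that $r_{12},s_{12}$ played there.
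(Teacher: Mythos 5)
Your proposal is correct and reaches the stated conclusion, but it organizes the $H^1$ estimate differently from the paper. You differentiate each pairwise relative energy $E_{jk}(t)=\int\frac12|\nabla(\psi_j-\psi_k)|^2+V|\psi_j-\psi_k|^2\,dx$ directly from the equation satisfied by the difference $\psi_j-\psi_k$; this is the literal $N$-body generalization of the two-oscillator argument in Proposition \ref{prop:H1_sync}. The paper instead works with the aggregated relative energy $\tilde E(t)=\frac{1}{2N^2}\sum_{j,k}E_{jk}(t)$, exploits the identity $\tilde E=E-E_z$ (with $E_z$ the energy of the order parameter $\zeta$), and differentiates $E_z$ using the evolution equation \eqref{eq:op} for $\zeta$, arriving at $\frac{d}{dt}\tilde E\le -K\tilde E+\frac{K}{2N^2}\sum_{j,k}(1-r_{jk})E_{jk}+\frac{K}{2N^2}\sum_{j,k}|s_{jk}|E_j$, which is closed by Gronwall exactly as in your scheme. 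Your route trades the $E_z$ bookkeeping for $N^2$ identical Gronwall arguments and gives the decay of each $E_{jk}$ individually rather than of their average; both rest on the same three ingredients you list (the uniform bound $E(t)\le CE(0)$, the improved rates $|1-r_{jk}|,|s_{jk}|,|1-\tilde r_j|,|\tilde s_j|\lesssim e^{-Kt}$ from Propositions \ref{prop:phase_synch} and \ref{prop:sync_id}, and integration by parts to control the error integrals by the total energy). Two small remarks. First, the sign in your difference equation is off: subtracting the two equations gives $-i\frac{K}{2}\bigl(\langle\zeta,\psi_j\rangle\psi_j-\langle\zeta,\psi_k\rangle\psi_k\bigr)$ on the right-hand side, not $+i\frac{K}{2}(\cdots)$; with your sign the coupling would be anti-damping, although your subsequent identification of the good term as $-\frac{K}{2}(2-f_j-f_k)E_{jk}$ shows you intended the correct computation. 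Second, your closing observation that $E_{jk}$ is not comparable to $\|\psi_j-\psi_k\|_{H^1}^2$ on the nose, because $V\in L^p+L^\infty$ is not sign-definite, is a genuine point that the paper leaves implicit: one does need the H\"older--Gagliardo--Nirenberg absorption of $\int V|\psi_j-\psi_k|^2$ together with the already established $L^2$ decay \eqref{eq:gdr} to pass from the decay of the (possibly sign-indefinite) relative energies to the decay of the $H^1$ norms, and this step is required in the paper's version as well.
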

\begin{proof}
We proceed as for the two oscillators case. Let us first define the total energy
\begin{equation*}
E(t)=\frac1N\sum_{j=1}^NE_j(t), \quad E_j(t)=\int\frac12|\nabla\psi_j|^2+V|\psi_j|^2\,dx
\end{equation*}
and let us show that it remains uniformly bounded for all times. Indeed, by differentiating it with respect to time, we obtain
\begin{equation*}
\begin{aligned}
\frac{d}{dt}E(t)=&\frac1N\sum_{j=1}^N2\int\RE\left\{\left(-\frac12\Delta\bar\psi_j+V\bar\psi_j\right)\frac{K}{2}(\zeta-\langle\zeta, \psi_j\rangle\psi_j)\right\}\,dx\\
=&-\frac{K}{N}\sum_{j=1}^N\tilde r_j(t)E_j(t)+\frac{K}{N}\sum_{j=1}^N\int\RE\left\{\frac12\nabla\bar\psi_j\cdot\nabla\zeta+V\bar\psi_j\zeta\right\}\,dx.
\end{aligned}
\end{equation*}
We write the last sum in the following way
\begin{equation*}
\frac1N\sum_{j=1}^N\int\RE\left\{\frac12\nabla\bar\psi_j\cdot\nabla\zeta+V\bar\psi_j\zeta\right\}\,dx
=\frac{1}{N^2}\sum_{j, k=1}^N\RE\left\{\frac12\nabla\bar\psi_j\cdot\nabla\psi_k+V\bar\psi_j\psi_k\right\}\,dx
\end{equation*}
and we notice that
\begin{equation}\label{eq:102}
\int\RE\left\{\frac12\nabla\bar\psi_j\cdot\nabla\psi_k+V\bar\psi_j\psi_k\right\}\,dx=-\frac12E_{jk}(t)+\frac12\left(E_j(t)+E_k(t)\right),
\end{equation}
where we defined
\begin{equation*}
E_{jk}(t)=\int\frac12|\nabla(\psi_j-\psi_k)|^2+V|\psi_j-\psi_k|^2\,dx.
\end{equation*}
By putting everything together we obtain
\begin{equation*}\begin{aligned}
\frac{d}{dt}E(t)=&\frac{K}{N}\sum_{j=1}^N(1-\tilde r_j(t))E_j(t)-\frac{K}{2N^2}\sum_{j, k=1}^NE_{jk}(t)\\
\leq&\frac{K}{N}\sum_{j=1}^N(1-\tilde r_j(t))E_j(t).
\end{aligned}\end{equation*}
Since we already proved that $|1-\tilde r_j(t)|\lesssim e^{-Kt}$, as $t\to\infty$, then by using Gronwall's inequality it follows
\begin{equation*}
E(t)\leq CE(0).
\end{equation*}
To obtain our synchronization results  we are going to use now the relative energy
\begin{equation*}
\tilde E(t):=\frac{1}{2N^2}\sum_{j, k=1}^NE_{jk}(t).
\end{equation*}
One has
\begin{equation}\label{eq:101}
E(t)=E_z(t)+\tilde E(t),
\end{equation}
where 
\begin{equation*}
E_z(t):=\int\frac12|\nabla\zeta|^2+V|\zeta|^2\,dx,
\end{equation*}
since
\begin{equation*}
\begin{aligned}
E_z(t)=&\frac{1}{N^2}\sum_{j, k=1}^N\int\RE\left\{\frac12\nabla\bar\psi_j\cdot\nabla\psi_k+V\bar\psi_j\psi_k\right\}\,dx\\
=&-\frac{1}{2N^2}\sum_{j, k=1}^NE_{jk}(t)+\frac{1}{N}\sum_{j=1}^NE_j(t),
\end{aligned}
\end{equation*}
where we used again formula \eqref{eq:102}.
By using \eqref{eq:101}, we then have
\begin{equation*}
\begin{aligned}
\frac{d}{dt}\tilde E(t)=&\frac{d}{dt}\left(E(t)-E_z(t)\right)\\
=&\frac{K}{N}\sum_{j=1}^N(1-\tilde r_j(t))E_j(t)-K\tilde E(t)\\
&-2\int\RE\left\{\left(-\frac12\Delta\bar\zeta+V\bar\zeta\right)\frac{K}{2}\left(\zeta-\frac1N\sum_{\ell=1}^N\langle\zeta, \psi_\ell\rangle\psi_\ell\right)\right\}\,dx\\
=&\frac{K}{N}\sum_{j=1}^N(1-\tilde r_j(t))E_j(t)-K\tilde E(t)-KE_z(t)\\
&+\frac{K}{N}\sum_{\ell=1}^N\RE\left\{\langle\zeta, \psi_\ell\rangle\int\frac12\nabla\bar\zeta\cdot\nabla\psi_\ell+V\bar\zeta\psi_\ell\,dx\right\}.
\end{aligned}\end{equation*}
Let us consider the last term on  the right hand side, it can be written as
\begin{multline*}
\frac{K}{N^2}\sum_{j, \ell=1}^N\RE\left\{\langle\psi_j, \psi_\ell\rangle\int\frac12\nabla\bar\psi_j\cdot\nabla\psi_\ell+V\bar\psi_j\psi_\ell\,dx\right\}\\
=\frac{K}{N^2}\sum_{j, \ell=1}^Nr_{j\ell}\int\RE\left\{\frac12\nabla\bar\psi_j\cdot\nabla\psi_\ell+V\bar\psi_j\psi_\ell\right\}\,dx
-\frac{K}{N^2}\sum_{j, \ell=1}^Ns_{j\ell}\int\IM\left\{\frac12\nabla\bar\psi_j\cdot\nabla\psi_\ell+V\bar\psi_j\psi_\ell\right\}\,dx.
\end{multline*}
The first summand can be written as 
\begin{equation*}
\frac{K}{N^2}\sum_{j, \ell=1}^Nr_{j\ell}\left[-\frac12E_{jk}(t)+\frac12(E_j(t)+E_\ell(t))\right]
=-\frac{K}{2N^2}\sum_{j, \ell=1}^Nr_{j\ell}E_{j\ell}+\frac{K}{N}\sum_{j=1}^N\tilde r_jE_j,
\end{equation*}
where we used the symmetry $r_{j\ell}=r_{\ell j}$ and the fact that $\tilde r_j=\frac1N\sum r_{j\ell}$. By putting everything together we then obtain
\begin{equation*}\begin{aligned}
\frac{d}{dt}\tilde E(t)=&KE(t)-\frac{K}{2N^2}\sum_{j, k=1}^N(1+r_{jk}(t))E_{jk}(t)-KE_z(t)\\
&-\frac{K}{N^2}\sum_{j, k=1}^Ns_{jk}(t)\int\IM\left\{\frac12\nabla\bar\psi_j\cdot\nabla\psi_k+V\bar\psi_j\psi_k\right\}\,dx\\
=&-\frac{K}{N}\sum_{j, k=1}^Nr_{jk}(t)E_{jk}(t)-\frac{K}{N^2}\sum_{j, k=1}^Ns_{jk}(t)\int\IM\left\{\frac12\nabla\bar\psi_j\cdot\nabla\psi_k+V\bar\psi_j\psi_k\right\}\,dx.
\end{aligned}\end{equation*}
We can write the last equality as
\begin{equation*}\begin{aligned}
\frac{d}{dt}\tilde E(t)=&-K\tilde E(t)+\frac{K}{2N^2}\sum_{j, k=1}^N(1-r_{jk}(t))E_{jk}(t)\\
&-\frac{K}{N^2}\sum_{j, k=1}^Ns_{jk}(t)\int\IM\left\{\frac12\nabla\bar\psi_j\cdot\nabla\psi_k+V\bar\psi_j\psi_k\right\}\,dx\\
\leq&-K\tilde E(t)+\frac{K}{2N^2}\sum_{j, k=1}^N(1-r_{jk}(t))E_{jk}(t)+\frac{K}{2N^2}\sum_{j, k=1}^N|s_{jk}(t)|E_j(t).
\end{aligned}\end{equation*}
Therefore by using the Gronwall's inequality and by exploiting the optimal decay rate for $1-r_{jk}(t)$ and $s_{jk}(t)$ we obtain
\begin{equation*}
\tilde E(t)\leq e^{-Kt}\tilde E(0)+\frac{K}{2N^2}\sum_{j, k=1}^N\int_0^te^{-K(t-s)}e^{-Ks}\left(E_{jk}(s)+E_j(s)\right)\,ds.
\end{equation*}
By noticing that both $E_{jk}$ and $E_j$ are uniformly bounded in time we then obtain
\begin{equation*}
\tilde E(t)\leq e^{-KT}\left(\tilde E(0)+CtE(0)\right).
\end{equation*}
\end{proof}
\section{Alignment of the Schr\"odinger momenta}
We devote our last Section in showing that synchronization holds also for the momenta associated to the wave functions $\psi_j$, namely the probability densisties $\rho_j:=|\psi_j|^2$, and the current densities 
$J_j:=\IM(\bar\psi_j\nabla\psi_j)$.

Having at hand the $H^1$ synchronization results proved in the previous Sections it is now straightforward to show that, under the same assumptions of Theorems \ref{prop:H1_sync} and \ref{thm:H1_N}, we have
\begin{equation*}
\lim_{t\to\infty}\|\rho_j(t)-\rho_k(t)\|_{L^1}=\lim_{t\to\infty}\|J_j(t)-J_k(t)\|_{L^1}=0, \quad\textrm{for any}\;j, k=1, \dotsc, N.
\end{equation*}
Let us remark that, even in the case of non-identical oscillators, that is in the case when we only have complete phase synchronization for the wave functions, the momenta exhibit anyway complete frequency localization, or more precisely they show alignment.
\section*{Acknowledgement}
After we obtained the results in the present paper, which were anticipated by P.Marcati on March 2016 in \cite{Pad}, we learned that similar results were obtained independently by Ha and Huh in \cite{HH}.

\end{document}